\newcommand{\IR}{{\mathbb R}}
\newcommand{\CC}{\mathcal{C}}
\newcommand{\C}{\mathbb C}
\newcommand\restr[2]{\ensuremath{\left.#1\right|_{#2}}}
\theoremstyle{plain}
\newtheorem{thm}{Theorem}[section]
\newtheorem{cor}[thm]{Corollary}
\newtheorem{lem}[thm]{Lemma}
\newtheorem{prop}[thm]{Proposition}
\newtheorem{rem}[thm]{Remark}
\theoremstyle{definition}
\newtheorem{defn}[thm]{Definition}
\numberwithin{equation}{section}
\def\d{\delta}
\def\s{\sigma}
\def\d{\delta}
\def\s{\sigma}
\def\QQ{{\mathbb Q}}
\def\CC{{\mathbb C}}
\def\RR{{\mathbb R}}
\def\NN{{\mathbb N}}
\def\ZZ{{\mathbb Z}}
\def\d{{\mathrm{d}}}
\def\Sd{{\mathbb{S}^3}}
\def\Spv{{\mathbb{S}^3(p,v)}}
\def\SS{{\mathbb{S}}}
\def\S{{\mathbb{S}^{2n+1}}}
\def\i{{\mathrm{i}}}
\def\Re{{\mathrm{Re}}}
\def\II{\mathrm{\RNum{2}}}
\newcommand{\gc}{\gamma}
\newcommand{\dgc}{\dot{\gc}}
\newcommand{\ddgc}{\ddot{\gc}}
\def\bar{\overline}
\setlist[itemize]{noitemsep, topsep=0pt}
\newcommand{\vast}{\bBigg@{2}}
\newcommand{\Vast}{\bBigg@{5}}
\newcommand{\RNum}[1]{\uppercase\expandafter{\romannumeral #1\relax}}
\title[Magnetic geodesics on odd-dimensional spheres]{The Hopf--Rinow theorem and the Mañé critical value\\ for magnetic geodesics on odd-dimensional spheres} 
\author{P. Albers}
\address{Faculty of Mathematics and Computer Science,
	University of Heidelberg,
	Im Neuenheimer Field 205,
	69120 Heidelberg, Germany}
\email{palbers@mathi.uni-heidelberg.de}
\author{G. Benedetti}
\address{Department of Mathematics,
	Vrije Universiteit Amsterdam - Faculty of Science,
	De Boelelaan 1111,
	1081 HV Amsterdam, The Netherlands}
\email{g.benedetti@vu.nl}
\author{L. Maier}
\address{Faculty of Mathematics and Computer Science,
	University of Heidelberg,
	Im Neuenheimer Field 205,
	69120 Heidelberg, Germany}
\email{lmaier@mathi.uni-heidelberg.de}
\keywords{magnetic geodesics, Hopf-Rinow, Ma\~{n}\'e critical value}
\subjclass[2020]{37J45, 37J55, 53D25}
\begin{document}
	\maketitle
	\renewcommand{\abstractname}{Abstract}
	\begin{abstract}
		The subject of this article are magnetic geodesics on odd-dimensional spheres endowed with the round metric and with the magnetic potential given by the standard contact form. We compute the Mañé critical value of the system and show that a value of the energy is supercritical if and only if all pairs of points on the sphere can be connected by a magnetic geodesic with that value of the energy. Our methods are explicit and rely on the description of the submanifolds invariant by the flow and of the symmetries of the system, which we define for a general magnetic system and call totally magnetic submanifolds and magnetomorphisms, respectively. We recover hereby the known fact that the system is super-integrable: the three-spheres obtained intersecting the ambient space with a complex plane are totally magnetic and each magnetic geodesic is tangent to a two-dimensional Clifford torus. In our study the integral of motion given by the angle between magnetic geodesics and the Reeb vector field plays a special role, and can be used to realize the magnetic flow as an interpolation between the sub-Riemannian geodesic flow of the contact distribution and the Reeb flow of the contact form. 
	\end{abstract}

	\section{Introduction and statement of results}
	In the 1960s the motion of a charged particle in a magnetic field was put into the context of modern dynamical systems by V.\ Arnold in his pioneering work \cite{Arnold2009}. The motion has the following mathematical description. Let $(M,g)$ be a closed, connected Riemannian manifold and $\sigma\in\Omega^2(M)$ be a closed two-form. The form $\sigma$ is called \emph{magnetic field} and the triple $(M,g,\sigma)$ is called \emph{magnetic system}. This determines the skew-symmetric bundle endomorphism $Y\colon TM\to TM$, the \emph{Lorentz force}, by
	\begin{equation}\label{e:Lorentz}
	g_q\left(Y_qu,v\right)=\sigma_q(u,v),\qquad \forall\, q\in M,\ \forall\,u,v\in T_qM.
	\end{equation}
Then a smooth curve $\gc\colon \RR\to M$  satisfying \begin{equation}\label{e:mg}
		\nabla_{\dgc}\dgc= Y_{\gc}\dgc
	\end{equation}
	is called a \emph{magnetic geodesic} of $(M,g,\sigma)$. Here $\nabla$ denotes the Levi-Civita of the metric $g$. From \eqref{e:mg}, we see that a magnetic geodesic with $\sigma=0$ is a standard geodesic for the metric $g$ and we can consider \eqref{e:mg} as a linear perturbation of the geodesic equation. Therefore, one of the main points of interest is to work out the similarities and differences between standard and magnetic geodesics.
	
Since $Y$ is skew-symmetric, magnetic geodesics have constant kinetic energy $E(\gamma,\dot\gamma):=\tfrac12g_\gamma(\dot\gamma,\dot\gamma)$, and hence constant speed $|\dot\gamma|:=\sqrt{g_\gamma(\dot\gamma,\dot\gamma)}$, just like standard geodesics.
	
Energy conservation is a footprint of the Hamiltonian nature of the system. Indeed, let us define the \emph{magnetic geodesic flow} on the tangent bundle by
	\[
	\varPhi_{g,\sigma}^t\colon TM\to TM,\quad (q,v)\mapsto \left( \gc_{q,v}(t),\dgc_{q,v}(t)\right),\quad \forall t\in\RR,
	\] where $\gc_{q,v}$ is the unique magnetic geodesic with initial value $(q,v)\in TM$. By \cite{Gin}, $\Phi^t_{g,\sigma}$ is the Hamiltonian flow given by the kinetic energy $E\colon TM\to\RR$ and the twisted symplectic form 
	\[
	\omega_\sigma:=\d\lambda-\pi^*_{TM}\sigma,
	\]
	where $\lambda$ is the metric pullback of the canonical Liouville $1$-form from $T^*M$ to $TM$ and $\pi_{TM}\colon TM\to M$ is the projection.
	
	However, differently from the case of standard geodesics, magnetic geodesics of different speeds are not just reparametrization of unit speed magnetic geodesics. For instance, if $M=\SS^2$, $g$ has constant curvature $1$ and $\sigma$ is the corresponding area form, then magnetic geodesics of kinetic energy $k$ are geodesic circles of radius $\arctan(\sqrt
	{2k})$ traversed counterclockwise, see \cite{BKmag}. What we can say in general is that, for every $s>0$, $\gamma$ is a magnetic geodesic of $(M,g,\sigma)$ with speed $\tfrac1s$ if and only if the unit speed reparametrization of $\gamma$ is a magnetic geodesic of $(M,g,s\sigma)$. This means that investigating magnetic geodesics of $(M,g,\sigma)$ for different speeds is the same as investigating the magnetic geodesics of $(M,g,s\sigma)$ with unit speed and varying $s>0$. This approach has the advantage that for $s=0$ we recover standard geodesics and for $s<0$, we recover the magnetic geodesics of $(M,g,\sigma)$ with the opposite orientation. With this notation, $s$ will be called the \emph{strength} of the magnetic geodesic. 
	\medskip
	
	To prove their landmark result about the equivalence between geodesic completeness and metric completeness, Hopf and Rinow showed that every two points on a connected and geodesically complete manifold can be connected by a standard geodesic, see \cite{HR,Milnor}. Since closed manifolds are geodesically complete, it follows that every two points on $M$ are connected by a standard geodesic. As seen above, this is independent of the speed (or energy) of the required geodesic. In the present article we investigate whether this result remains true for magnetic geodesics with a prescribed kinetic energy. In other words, we ask for which points $p,q\in M$ and for which $k>0$ there is a magnetic geodesic with energy $k$ connecting $p$ to $q$.
	
	 An important role in this question is played by the Mañé critical value $c(M,g,\sigma)$ of the universal cover $\hat M$ of $M$. If $\sigma$ is weakly exact, that is, the lift $\hat\sigma$ of $\sigma$ is exact, we define
	\begin{equation}\label{e:maneuni}
   c(M,g,\sigma):=\inf_{\d\hat\alpha=\hat\sigma}\tfrac12\Vert \hat\alpha\Vert_\infty^2\;\in[0,\infty],
	\end{equation}
where the infimum is taken over all primitives $\hat\alpha$ of $\hat\sigma$ and $\Vert\cdot\Vert_\infty$ denotes the supremum norm over $\hat M$ with respect to the lift $\hat g$ of the metric $g$. Notice that $c(M,g,\sigma)<\infty$ if and only if $\hat\sigma$ has a bounded primitive $\hat\alpha$, that is, $\Vert\hat\alpha\Vert_\infty<\infty$.

The Mañé critical value has equivalent definitions in terms of the action functional. In particular, if $M$ is simply connected and $\sigma$ is exact, then $-c(M,g,\sigma)$ is the minimal action of a Borel invariant probability measure for the system. As a consequence, the energy level $\{E=c(M,g,\sigma)\}\subset TM$ contains the so-called Mather set $\widetilde{\mathcal M}$ which is the support of the action-minimizing probability measures of the system \cite{Sor}. We refer to Section \ref{Section manes value} for more background on the Mañé's critical value.\medskip
	
	Our interest in the critical value relies on the fact that if $k>c(M,g,\sigma)$ every two points on $M$ can be connected by a magnetic geodesic of energy $k$, as follows from \cite[Corollary B]{Co06} when $\sigma$ is exact and, more generally, from \cite[Theorem 3.2]{Me09} when $\sigma$ is weakly exact. Indeed, if $k>c(M,g,\sigma)$, there exists a primitive $\hat\alpha$ such that \begin{equation}\label{e:finsler}
	F\colon T\hat M\to\RR,\qquad F(\hat q,\hat v):=\sqrt{2k}|\hat v|_{\hat q}-\hat\alpha_{\hat q}(\hat v),\quad \forall\,(\hat q,\hat v)\in T\hat M
	\end{equation}
	is a Finsler metric on $\hat M$ whose geodesics are lifts of magnetic geodesics on $M$ with energy $k$. Finsler geodesics connecting pairs of points on $\hat M$ can then be found by minimizing the $F$-length of paths and project to the desired magnetic geodesics on $M$, and such magnetic geodesics are close to standard geodesics in a sense made precise in \cite{PS}.
	
	If $k\leq c(M,g,\sigma)$, then there might be pairs of points which cannot be not connected by a magnetic geodesic with energy $k$, as the example of $(\SS^2,g,\sigma)$ mentioned above shows. Indeed, in this case $c(\SS^2,g,\sigma)=\infty$ since $\SS^2$ is simply connected and $\sigma$ is not exact, and two points can be connected by a magnetic geodesic of energy $k$ if and only if they are at distance at most $2\arctan(\sqrt{2k})$, which is strictly less than $\pi$, the diameter of $\SS^2$. In higher dimensions, except for the case of Kähler magnetic systems with constant holomorphic curvature \cite{Ada95}, there is no good understanding about which pairs of points fail to be connected for a given subcritical energy, even for simple systems. 
	
	The main result of this article is to analyze the existence of magnetic geodesics connecting pairs of points for the magnetic system 
	\begin{equation}\label{e:magsys}
	(\S,g,\d\alpha).
	\end{equation}
	Here the manifold $\S$ is the sphere of radius $1$ in $\C^{n+1}$ with standard Hermitian product $\langle\cdot,\cdot\rangle$, the metric  $g=\mathrm{Re}\langle\cdot,\cdot\rangle$ is the restriction of the Euclidean metric to $\S$, and the magnetic potential $\alpha$ is the standard contact form on $\S$, that is, $\alpha_z=\tfrac12\mathrm{Re}\langle \mathrm iz,\cdot\rangle$ for all $z\in\S$. The Reeb vector field of $\alpha$ is the unique vector field $R$ on $\S$ such that $\d\alpha(R,\cdot)=0$ and $\alpha(R)=1$. In this case, we get $R_z=2\mathrm iz$. The trajectories of the flow 
	\begin{equation}\label{e:hopfflow}
	\Phi_R^t(z)=e^{2\mathrm it}z,\qquad \forall\, t\in\RR,\ z\in\S
	\end{equation}
	are the fibers of the Hopf map $\pi\colon \S\to \mathbb CP^n$, which sends each point on $\S$ to the complex line through it. This is the simplest example of a Zoll Reeb flow, where all orbits of the Reeb vector field are periodic and with the same minimal period \cite{APB,ABE24}. Finally, $\ker\alpha$ is the contact distribution of $\alpha$ and, in our case, coincides with the orthogonal of $R$.  
	\begin{thm}\label{t:mane}
	The Mañé's critical value of the system is
	\[
	c(\S,g,\d\alpha)=\tfrac{1}{2}\Vert\alpha\Vert_\infty^2=\tfrac18.
	\]
	The Mather set of the system is 
	\[
	\widetilde{\mathcal M}=\{(z,\tfrac14 R_z)\mid z\in\S\}\subset T\S.
	\]
	Let $q_0$ and $q_1$ be two points on $\S$ and denote by $\langle q_0,q_1\rangle$ their Hermitian product. For every $k>0$, let $\mathcal G_k(q_0,q_1)$ be the set of magnetic geodesics with energy $k$ connecting $q_0$ and $q_1$. We have the following three cases
	\begin{enumerate}
		\item if $k>\tfrac18$, then $\mathcal G_k(q_0,q_1)\neq\varnothing$;
		\item if $k=\tfrac18$, then $\mathcal G_k(q_0,q_1)\neq\varnothing$ if and only if $\langle q_0,q_1\rangle\neq0$;
		\item if $k<\tfrac18$, we have the following three subcases 
		\begin{enumerate}
			\item if $|\langle q_0,q_1\rangle|> \sqrt{1-8k}$, then $\mathcal G_k(q_0,q_1)\neq\varnothing$;
			\item if $|\langle q_0,q_1\rangle|=\sqrt{1-8k}$, then there are $a_k,b_k\in\RR$ with $b_k>0$ such that $\mathcal G_k(q_0,q_1)\neq\varnothing$ if and only if $\langle q_0,q_1\rangle=e^{\mathrm i(a_k+mb_k)}\sqrt{1-8k}$ for some $m\in\ZZ$;
			\item if $|\langle q_0,q_1\rangle|< \sqrt{1-8k}$, then $\mathcal G_k(q_0,q_1)=\varnothing$.
		\end{enumerate}
	\end{enumerate}
	\end{thm}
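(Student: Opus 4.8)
Since $\S$ is simply connected and $\d\alpha$ is exact, \eqref{e:maneuni} reads $c(\S,g,\d\alpha)=\inf_{\d\beta=\d\alpha}\tfrac12\Vert\beta\Vert_\infty^2$, the infimum running over all primitives $\beta=\alpha+\d f$. First I would note that the $g$-dual of $\alpha_z$ is $\tfrac14R_z=\tfrac12\mathrm iz$, of constant norm $\tfrac12$, so $\Vert\alpha\Vert_\infty=\tfrac12$ and $\tfrac12\Vert\alpha\Vert_\infty^2=\tfrac18$. That $\alpha$ realizes the infimum I would get by averaging over $\mathrm U(n+1)$, which acts by isometries preserving $\alpha$ (hence by magnetomorphisms): for any primitive $\beta$ the average $\overline{\beta}=\int_{\mathrm U(n+1)}U^*\beta\,\d U$ equals $\alpha+\d\overline{f}$ with $\overline{f}$ invariant, hence constant, so $\overline{\beta}=\alpha$; as averaging does not increase the supremum norm, $\Vert\alpha\Vert_\infty\le\Vert\beta\Vert_\infty$. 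For the Mather set, the curves $t\mapsto e^{\mathrm it/2}z$ are magnetic geodesics of energy $\tfrac18$ with $\dgc=\tfrac14R_\gc$ (this is the $\omega=0$ case of Part 2), so the probability measure obtained by pushing the normalized volume of $\S$ forward along $z\mapsto(z,\tfrac14R_z)$ is flow-invariant with action $\int(\tfrac12|\dgc|^2-\alpha(\dgc))=\tfrac18-\tfrac14=-\tfrac18=-c$. Hence it is action-minimizing and its support lies in $\widetilde{\mathcal M}$; since any minimizer is forced to have speed $\sqrt{2c}$ and velocity realizing the sup-norm-minimizing primitive $\alpha^\sharp$, the Mather set is exactly this graph.

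\textbf{Part 2: explicit integration and reduction to $\SS^3$.} Writing \eqref{e:mg} in the ambient $\C^{n+1}$ with $\nabla_{\dgc}\dgc=\ddgc+|\dgc|^2\gc$ and the Lorentz force $Y_zu=\mathrm iu+\mathrm{Im}\langle z,u\rangle z$, and using that $a:=\alpha(\dgc)=\tfrac12\mathrm{Im}\langle\gc,\dgc\rangle$ is a constant of motion (equivalently, the conserved angle to $R$), the magnetic geodesics of energy $k$ solve the linear constant-coefficient equation
\[
\ddgc=\mathrm i\,\dgc+(2a-2k)\gc .
\]
Boundedness on $\S$ forces the characteristic roots $\tfrac{\mathrm i}{2}(1\pm\omega)$ to be purely imaginary, with $\omega:=\sqrt{1-8a+8k}\ge0$, so every magnetic geodesic is
\[
\gc(t)=e^{\mathrm i(1+\omega)t/2}w_1+e^{\mathrm i(1-\omega)t/2}w_2,\qquad \langle w_1,w_2\rangle=0,\quad |w_1|^2+|w_2|^2=1 .
\]
Writing $|w_1|=\cos\psi$ and $|w_2|=\sin\psi$, the energy becomes $8k=1+\omega^2+2\omega\cos2\psi$. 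In particular $\gc$ lies on the Clifford torus spanned by $w_1,w_2$ inside the complex $2$-plane $\mathrm{span}_\C\{w_1,w_2\}$, i.e.\ in the totally magnetic $\SS^3$ cut out by that plane. Consequently, if $q_0,q_1$ are $\C$-linearly independent, every magnetic geodesic joining them lies in $\SS^3=\S\cap\mathrm{span}_\C\{q_0,q_1\}$, and the connecting problem reduces to $n=1$.

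\textbf{Part 3: the condition on $|\langle q_0,q_1\rangle|$.} With $q_0=\gc(0)=w_1+w_2$ and $q_1=\gc(2s)$, Hermitian orthogonality of $w_1,w_2$ gives the key identity
\[
\langle q_0,q_1\rangle=e^{\mathrm is}\bigl(\cos(\omega s)+\mathrm i\cos(2\psi)\sin(\omega s)\bigr),\qquad |\langle q_0,q_1\rangle|^2=1-\sin^2(\omega s)\sin^2(2\psi).
\]
The stabilizer $\mathrm U(1)\subset\mathrm U(n+1)$ of $q_0$ consists of magnetomorphisms fixing $q_0$ and acting transitively on each sphere $\{q_1:\langle q_0,q_1\rangle=\zeta\}$, so reachability of $q_1$ depends only on $\zeta=\langle q_0,q_1\rangle$, and it suffices to determine the attainable values of $\zeta$ in the closed unit disk of $\C$. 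Along a fixed geodesic $|\langle q_0,q_1\rangle|$ ranges over $[\,|\cos2\psi|,1\,]$, while the energy relation forces $|\cos2\psi|\ge\sqrt{\max(0,1-8k)}$. Taking the union over admissible $\psi$, the attainable moduli are $[0,1]$ if $k>\tfrac18$, $(0,1]$ if $k=\tfrac18$ (the modulus $0$ being excluded because $\cos2\psi=0$ then forces $\omega=0$, a Hopf fiber, which never lowers the modulus), and $[\sqrt{1-8k},1]$ if $k<\tfrac18$. This already produces the stated dichotomy on $|\langle q_0,q_1\rangle|$ in cases (1), (2), (3a) and (3c); the extreme values $|\zeta|=1$ (same complex line, reached along Hopf fibers) and the degenerate $\psi\in\{0,\tfrac\pi2\}$ are checked directly and are consistent.

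\textbf{Part 4: the phase, and the main obstacle.} It remains to handle the argument of $\langle q_0,q_1\rangle$. For a modulus $r$ strictly inside the attainable interval I would use the two-parameter family $(\psi,s)\mapsto\langle q_0,\gc(2s)\rangle=e^{\mathrm is}P(s)$, where $P$ is $2\pi/\omega$-periodic and $\langle q_0,\gc(2s+4\pi/\omega)\rangle=e^{2\pi\mathrm i/\omega}\langle q_0,\gc(2s)\rangle$; combining this rotation relation with the continuous variation of $\omega(\psi)$ through irrational values, a winding/continuity argument shows that the image covers the full open annulus $\{\sqrt{\max(0,1-8k)}<|\zeta|<1\}$, which settles cases (1), (2) and (3a). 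In the boundary case $|\langle q_0,q_1\rangle|=\sqrt{1-8k}$ with $k<\tfrac18$, the modulus and energy constraints jointly force $\cos2\psi=-\sqrt{1-8k}$, hence $\omega=\sqrt{1-8k}$ and $\sin^2(\omega s)=1$; evaluating the identity at $\omega s=\tfrac\pi2+m\pi$ yields precisely
\[
\langle q_0,q_1\rangle=e^{\mathrm i(a_k+mb_k)}\sqrt{1-8k},\qquad b_k=\pi\Bigl(1+\tfrac{1}{\sqrt{1-8k}}\Bigr)>0,
\]
which is case (3b). The routine computations of Parts 1--3 are essentially forced once the linear equation and its bounded solutions are in hand; I expect the genuine difficulty to lie in Part 4, namely rigorously proving surjectivity of the phase map in the strict regime (so that no pair is missed in case (3a)) and carefully extracting the arithmetic progression $a_k+mb_k$ in the boundary case, while keeping track of the degenerate geodesics.
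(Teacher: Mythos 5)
Your Parts 2--3 follow essentially the same route as the paper: integrate the linear ambient equation, read off the Clifford-torus form of the solutions, and extract the identity for $\langle q_0,q_1\rangle$ (your $e^{\mathrm is}\bigl(\cos(\omega s)+\mathrm i\cos(2\psi)\sin(\omega s)\bigr)$ is the paper's equation \eqref{e:time} up to the conjugation convention for the Hermitian product, and your inequality $\cos^2 2\psi\ge 1-8k$, which follows from $\bigl((1-8k)-\omega^2\bigr)^2\ge0$, is the paper's bound $|\lambda|\ge\sqrt{1-4/s^2}$ obtained there from the geometry of the half-circle $C_s$). Your Part 1 is a genuinely different and perfectly valid argument: the paper derives $c=\tfrac18$ and the Mather set from the measure-theoretic definition \eqref{d:mane3} via the general Killing-Lagrangian result (Theorem \ref{t:manegeneral}, pointwise lower bound $L\ge-\tfrac12\Vert\alpha\Vert_\infty^2$ plus the equality analysis), whereas you use definition \eqref{e:maneuni} and a $U(n+1)$-averaging of primitives; the averaging argument is shorter here but relies on the transitive isometric symmetry, while the paper's route generalizes to any Killing magnetic Lagrangian. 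Your boundary-case computation in (3b) is also fine: your $b_k=\pi\bigl(1+\tfrac{1}{\sqrt{1-8k}}\bigr)$ differs from the paper's $\pi\bigl|1-\tfrac{1}{\sqrt{1-8k}}\bigr|$ by exactly $2\pi$, so the two arithmetic progressions define the same set of phases.

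The one place where your write-up is not yet a proof is exactly the step you flag: surjectivity of the phase in cases (1), (2), (3a). "A winding/continuity argument shows that the image covers the full open annulus" is the conclusion, not the argument, and appealing to $\omega(\psi)$ passing "through irrational values" is not the mechanism you want. The paper closes this as follows. Fix the target modulus $r$ and the interval $I$ of admissible $\psi$ with $|\cos2\psi|\le r$; for each such $\psi$ choose the first time $T(\psi)$ realizing modulus $r$, so that the phase is a continuous function $\eta(\psi)$, and observe that shifting the time by $m$ full periods $2\pi/\omega(\psi)$ multiplies $\langle q_0,q_1\rangle$ by $e^{2\pi\mathrm i m/\omega(\psi)}$ without changing the modulus. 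Since $\omega$ is strictly monotone (non-constant) in $\psi$ on $I$, the total variation of $\psi\mapsto \tfrac{2\pi m}{\omega(\psi)}+\eta(\psi)$ over $I$ grows linearly in $|m|$, so for $|m|$ large its image contains an interval of length $2\pi$ by the intermediate value theorem, and every phase is hit. You have all the ingredients for this (the rotation relation and the dependence of $\omega$ on $\psi$ through the energy identity $8k=1+\omega^2+2\omega\cos2\psi$); you just need to assemble them into the quantitative statement above rather than invoking irrationality or equidistribution.
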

	\begin{rem}
		The result about the Mañé's critical value and the Mather set generalizes to magnetic systems $(M,g,\d\alpha)$, where $M$ is simply connected and the metric dual vector field $X$ of $\alpha$ is a Killing vector field for $g$, see Theorem \ref{t:manegeneral}
	\end{rem}
	\begin{rem}
	The fact that $\mathcal G_k(q_0,q_1)\neq\varnothing$ for $k>\tfrac18$ also follows by Contreras' result \cite[Corollary B]{Co06}, which constructs an action minimizing magnetic geodesics, or equivalently, a length-minimizing geodesic for a Finsler metric like in \eqref{e:finsler}. On the contrary our method of proof of (1), (2), (3) in Theorem \ref{t:mane} is not variational but relies on an explicit topological argument, see Subsection \ref{ss:mane2}. However, for $n=1$ and upon fixing $q_0$, we also show that for every $k<\tfrac18$, the Mañé critical value of the system restricted to the set $B_{q_0,k}:=\{q_1\in\S\mid |\langle q_0,q_1\rangle|> \sqrt{1-8k}\}$ is at least $k$, see Subsection \ref{ss:manehopf}. More concretely, we construct a primitive $\alpha_k$ of $\d\alpha$ on $B_{q_0,k}$ such that $\tfrac12\Vert \alpha_k\Vert_\infty^2<k$. A variational proof of (3) following this observation would require an extra argument and not follow directly from Contreras' result since $B_{q_0,k}$ is not a closed manifold and the Finsler metric associated with $\alpha_k$ degenerates on the boundary. We leave it to the interested reader to figure out if this proof strategy can be made to work.
	\end{rem}
	The magnetic system \eqref{e:magsys} has been studied in \cite{CFG} for $n=1$ and in \cite{MNsphere}, for arbitrary $n$. It fits into the larger class of magnetic systems induced by contact metric structures with symmetries, which have been studied on the three-torus \cite{MNtorus}, on Berger spheres \cite{IMBerger,IM24}, on the Heisenberg group \cite{EGMHeis,MNHeis} and the special linear group \cite{IMslr}, and on Sasakian manifolds \cite{Druta-Romaniuc2021}. The focus of these papers is, among other things, on finding periodic magnetic geodesics and on describing geometric properties of magnetic geodesics in terms of Hopf tubes and of higher curvatures. Magnetic systems with symmetries have also been studied in the context of integrable systems in \cite{Efi,BJ}, while their relation to symplectic invariants and the Mañe critical value has been investigated in \cite{Bim24b,Bim24,JB24,BM24} and \cite{CFP10}, respectively. The main novelty of Theorem \ref{t:mane} is, therefore, to compute the Mañé critical value for a class of magnetic systems with symmetries on high dimensional manifold (see also Theorem \ref{t:manegeneral}) and to characterize in a precise manner which pair of points are connected by a magnetic geodesic of a given subcritical energy. We refer to \cite{M24} for a generalization of \Cref{t:mane} to infinite-dimensional spheres, the completion of a regular Lie group, and an application to partial differential equations.
	
	\subsection{Magnetomorphisms and totally magnetic submanifolds}
	A key feature of the magnetic system $(\S,g,\d\alpha)$ is that it has a large group of symmetries and, as a byproduct, a large family of invariant submanifolds. The symmetries of a general magnetic system $(M,g,\sigma)$ are given by so-called \emph{magnetomorphisms} $F\colon M\to M$, that is, diffeomorphisms that preserve both $g$ and $\sigma$. As a consequence, magnetomorphisms send magnetic geodesics to magnetic geodesics with the same energy, see Proposition \ref{p:magneto}. On the other hand, invariant submanifolds are given by so-called \emph{totally magnetic submanifolds} $N\subset M$ which have the property that every magnetic geodesic that is tangent to $N$ is locally contained in $N$. Thus magnetomorphisms send totally magnetic submanifolds to totally magnetic submanifolds. 
	
	When $\sigma = 0$, totally magnetic submanifolds reduce to the classical notion of totally geodesic submanifolds. In Riemannian geometry, there are several equivalent characterizations of totally geodesic submanifolds; notably, a submanifold is totally geodesic if and only if its second fundamental form $\II$ vanishes. To develop an analogous infinitesimal characterization for totally magnetic submanifolds, we extend this classical condition by incorporating an additional constraint induced by the magnetic field $\sigma$. This leads to an infinitesimal criterion that characterizes totally magnetic submanifolds in the presence of a magnetic field.
	
	\begin{thm}\label{t:totmag}
Let $(M,g_M,\s_M)$ be a magnetic system. Let $N\subset M$ be a closed, embedded submanifold. Denote by $\iota\colon N\to M$ the inclusion map and by $g_N:=\iota^*g_M$ and $\s_N:=\iota^*\s_M$ the pullback metric and magnetic field. The following statements are equivalent:\begin{enumerate}
			\item The submanifold $N$ is totally magnetic in $(M,g_M,\s_M)$.
			\item If $\gc$ is a magnetic geodesic in $\left(N,g_N,\s_N\right)$ then $\iota\circ\gc$ is a magnetic geodesic in $\left(M,g_M,\s_M\right)$.
	\item The second fundamental form of $(N,g_N)$ vanishes identically 
		\[
		\II_q(v)=0\quad \forall (q,v)\in TN.
		\]
			and one of the following three equivalent conditions holds
			\begin{enumerate}
				\item the Lorenz force $Y^M$ of $(M,g_M,\s_M)$ along $N$ is equal to the Lorentz force $Y^N$ of $(N,g_N,\s_N)$: 
				\[
				Y^N_qv= Y^M_qv\quad \forall (q,v)\in TN;
				\]
				\item the Lorentz force $Y^M$ leaves the tangent bundle of $N$ invariant: 
				\[
				Y_q^Mv\in TN \quad\forall(q,v)\in TN;
				\] 
				\item the $g_M$-orthogonal of $TN$ is contained in the $\s_M$-orthogonal of $TN$:
				\[(\sigma_M)_q(u,v)=0 \quad \forall q\in N,\ u \in T_q N,\ v\in T_q N^{\perp}. 
				\]
			\end{enumerate}
		\end{enumerate}
	\end{thm}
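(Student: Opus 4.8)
The plan is to funnel all the implications through one Gauss-type splitting of the magnetic geodesic equation along $N$, after first clearing away the pointwise equivalence of (a), (b), (c). For that equivalence I would note that for $u,w\in T_qN$ the relations $g_N=\iota^*g_M$ and $\sigma_N=\iota^*\sigma_M$ give $g_M(Y^M_qu,w)=(\sigma_M)_q(u,w)=(\sigma_N)_q(u,w)=g_M(Y^N_qu,w)$, so that, with respect to the orthogonal splitting $T_qM=T_qN\oplus T_qN^\perp$ afforded by embeddedness, the tangential part of the ambient Lorentz force is always the intrinsic one: $(Y^M_qu)^\top=Y^N_qu$ for every $u\in T_qN$, with no hypothesis on $N$. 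Consequently (a) and (b) both say exactly that $(Y^M_qu)^\perp=0$ for all $u\in T_qN$, and (c) is the same vanishing read against normal vectors through $g_M(Y^M_qu,v)=(\sigma_M)_q(u,v)$; this gives (a)$\Leftrightarrow$(b)$\Leftrightarrow$(c).

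The engine is the following remark. For any curve $\gamma$ contained in $N$, the Gauss formula $\nabla^M_{\dot\gamma}\dot\gamma=\nabla^N_{\dot\gamma}\dot\gamma+\II(\dot\gamma,\dot\gamma)$ together with $(Y^M_\gamma\dot\gamma)^\top=Y^N_\gamma\dot\gamma$ decomposes the ambient equation $\nabla^M_{\dot\gamma}\dot\gamma=Y^M_\gamma\dot\gamma$ into a tangential part $\nabla^N_{\dot\gamma}\dot\gamma=Y^N_\gamma\dot\gamma$ (namely ``$\gamma$ is magnetic in $N$'') and a normal part $\II(\dot\gamma,\dot\gamma)=(Y^M_\gamma\dot\gamma)^\perp$. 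Thus a curve in $N$ is a magnetic geodesic of $M$ if and only if it is a magnetic geodesic of $N$ and in addition satisfies this normal constraint.

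With this splitting, $(1)\Leftrightarrow(2)$ reduces to uniqueness of solutions of the magnetic geodesic ODE (a first-order flow on $TN$, resp. $TM$, so for every initial datum there is a unique local solution). Fix $(q,v)\in TN$ and let $\gamma^N,\gamma^M$ be the magnetic geodesics of $N$ and of $M$ with this datum. For $(2)\Rightarrow(1)$, hypothesis $(2)$ makes $\iota\circ\gamma^N$ a magnetic geodesic of $M$, hence $\iota\circ\gamma^N=\gamma^M$ by uniqueness, so every $M$-magnetic geodesic tangent to $N$ locally stays in $N$. For $(1)\Rightarrow(2)$, hypothesis $(1)$ forces $\gamma^M$ to lie in $N$, and then the splitting shows its tangential equation makes it a magnetic geodesic of $N$ sharing the datum of $\gamma^N$; uniqueness in $N$ gives $\gamma^M=\iota\circ\gamma^N$, so $\iota\circ\gamma^N$ is magnetic in $M$.

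Finally $(2)\Leftrightarrow(3)$. For $(3)\Rightarrow(2)$, if $\II\equiv0$ and (b) holds then the normal constraint $\II(\dot\gamma,\dot\gamma)=(Y^M_\gamma\dot\gamma)^\perp$ is automatically $0=0$ along any $N$-magnetic geodesic, so the splitting promotes it to an $M$-magnetic geodesic. The only step with real content is $(2)\Rightarrow(3)$: hypothesis $(2)$ forces the normal constraint along every $N$-magnetic geodesic, and evaluating at the initial instant while letting $(q,v)$ range over all of $TN$ yields the pointwise identity
\[
\II_q(v,v)=(Y^M_qv)^\perp\qquad\text{for all }(q,v)\in TN.
\]
Here the left side is homogeneous of degree $2$ in $v$ while the right side is homogeneous of degree $1$; substituting $\lambda v$ for $v$ and matching the coefficients of $\lambda^2$ and $\lambda$ forces $\II_q(v,v)=0$ and $(Y^M_qv)^\perp=0$ separately. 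Polarizing the first identity yields $\II\equiv0$, and the second is exactly (b). This homogeneity separation is the one genuinely clever point; the rest is the linear algebra of the first paragraph and the tangential/normal bookkeeping, so I expect no further obstacle.
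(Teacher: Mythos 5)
Your proposal is correct and follows essentially the same route as the paper: the equivalence of (a), (b), (c) by the pointwise identity $(Y^M_q u)^\top = Y^N_q u$, the equivalence $(1)\Leftrightarrow(2)$ by uniqueness of solutions to the magnetic geodesic ODE, and $(2)\Leftrightarrow(3)$ via the Gauss splitting of the ambient equation followed by the degree-$2$ versus degree-$1$ homogeneity separation of $\II_q(v,v)$ and $(Y^M_q v)^\perp$. No gaps.
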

	As an application, we classify magnetomorphisms and \emph{closed, connected} totally magnetic submanifolds with \emph{positive dimension} of our magnetic system of interest.
		\begin{cor}
		\label{c:symmetry}
		The group of magnetomorphisms of $\left(\S, g,\d\alpha\right)$ is the group of unitary matrices, that is, 
		\[
		\mathrm{Mag}(\S,g,\d\alpha)=U(n+1).
		\]
The lift of the magnetomorphism group to $T\S$ is Hamiltonian with respect to the twisted symplectic form $\omega_{\d\alpha}$. Its moment map is given by
\[
\mu\colon T\S\to \mathfrak u(n+1)^*,\qquad \mu(z,v)[A]:= g_z(Az,v)-\alpha_z(Az),\quad \forall\,A\in\mathfrak u(n+1),\ \forall\,(z,v)\in T\S.
\]
		A closed, connected submanifold $N$ of $\S$ of positive dimension is totally magnetic if and only if $N=\S\cap V$, where $V$ is a complex vector subspace of $\CC^{n+1}$. In this case, $N$ is magnetomorphic to $\SS^{2j+1}$ for some $j$, that is,
		\[
		\left(N,g_N,\sigma_N\right)\cong\left(\SS^{2j+1}, g,\d\alpha\right).
		\]
	\end{cor}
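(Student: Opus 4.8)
The plan is to treat the three assertions in turn, using the classical description of the isometry group of the round sphere for the first, exactness of $\omega_{\d\alpha}$ for the second, and the infinitesimal criterion of Theorem~\ref{t:totmag} for the third.

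\emph{Magnetomorphism group.} Every magnetomorphism preserves $g$, hence is an isometry of the round sphere, and is therefore the restriction of a linear orthogonal transformation $A\in O(2n+2)$, since the isometry group of $\SS^{2n+1}$ is $O(2n+2)$. It remains to identify which such $A$ additionally preserve $\sigma=\d\alpha$. First I would record the pointwise formula $\d\alpha_z(u,v)=\mathrm{Re}\langle \mathrm iu,v\rangle$ for $u,v\in T_z\S$, obtained by differentiating the ambient primitive $\tfrac12\mathrm{Re}\langle \mathrm iz,\cdot\rangle$, so that $\d\alpha$ is the restriction to $T\S$ of the constant-coefficient form $\omega_0(u,v):=\mathrm{Re}\langle \mathrm iu,v\rangle=g(\mathrm iu,v)$. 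For $A\in O(2n+2)$ one computes $\omega_0(Au,Av)=g(\mathrm iAu,Av)=g(A^{-1}\,\mathrm i\,Au,v)$, so $A$ preserves $\omega_0$ as a bilinear form if and only if $A$ commutes with multiplication by $\mathrm i$, i.e.\ if and only if $A\in U(n+1)$. The only point needing care is the passage from ``$A$ preserves $\d\alpha$ on each $T_z\S$'' to ``$A$ preserves $\omega_0$ on all of $\C^{n+1}$'': given arbitrary $u,v$, since $2n+2\ge 4$ their span has nonzero orthogonal complement, so one can pick a unit $z$ with $u,v\in z^\perp=T_z\S$, and the tangential identity already yields $\omega_0(Au,Av)=\omega_0(u,v)$. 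Conversely each $A\in U(n+1)$ preserves $\alpha$ itself, hence $\d\alpha$, giving $\mathrm{Mag}(\S,g,\d\alpha)=U(n+1)$.

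\emph{Moment map.} Because $\d\alpha$ is exact, $\omega_{\d\alpha}=\d\lambda-\pi_{TM}^*\d\alpha=\d\theta$ with primitive $\theta:=\lambda-\pi_{TM}^*\alpha$. The lifted action $\Psi_A(z,v)=(Az,Av)$ preserves $\lambda$ by naturality of the Liouville form under the tangent lift of a diffeomorphism, and preserves $\pi_{TM}^*\alpha$ since $A$ preserves $\alpha$ and $\pi_{TM}\circ\Psi_A=A\circ\pi_{TM}$; hence it preserves $\theta$. For $A\in\mathfrak u(n+1)$ with generating vector field $\hat X_A$, Cartan's formula then gives $\iota_{\hat X_A}\omega_{\d\alpha}=-\d(\theta(\hat X_A))$, so the associated Hamiltonian is $\mu(\cdot)[A]=\theta(\hat X_A)$. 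Evaluating at $(z,v)$ with $\d\pi_{TM}(\hat X_A)=Az$ gives $\lambda(\hat X_A)=g_z(Az,v)$ and $(\pi_{TM}^*\alpha)(\hat X_A)=\alpha_z(Az)$, which is exactly the stated formula; equivariance is automatic from the naturality of $\theta$.

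\emph{Classification, easy direction.} For the ``if'' direction let $V\subseteq\C^{n+1}$ be a complex subspace, so that $N=\S\cap V$ is a great subsphere and hence totally geodesic in $\S$, i.e.\ $\II\equiv0$. At $q\in N$ the $g$-orthogonal complement of $T_qN$ inside $T_q\S$ is the Hermitian complement $V^{\perp}$, and for $u\in T_qN\subseteq V$ and $v\in V^{\perp}$ we have $\mathrm iu\in V$, so $\d\alpha(u,v)=\mathrm{Re}\langle \mathrm iu,v\rangle=0$. Thus condition (3)(c) of Theorem~\ref{t:totmag} holds and $N$ is totally magnetic. Choosing $A\in U(n+1)$ with $A(V)=\C^{j+1}$, the span of the first $j+1$ coordinates, exhibits the magnetomorphism $(N,g_N,\sigma_N)\cong(\SS^{2j+1},g,\d\alpha)$.

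\emph{Classification, hard direction, and the main obstacle.} Conversely, if $N$ is totally magnetic then by Theorem~\ref{t:totmag} it is totally geodesic in $\S$; being closed, connected and positive-dimensional it is complete, so a connected complete totally geodesic submanifold equals $\exp_q(T_qN)$, which for the round sphere is the great subsphere $N=\S\cap W$ with $W=\RR q\oplus T_qN$ a real subspace of dimension $\dim N+1\ge 2$. The crux is to upgrade $W$ to a complex subspace using the magnetic condition. At $q\in N$ one has $T_qN=W\cap q^\perp$, and the $g$-orthogonal complement of $T_qN$ in $T_q\S$ is the real orthogonal complement $W^{\perp}$, so condition (3)(c) reads $g(\mathrm iu,v)=0$ for all $u\in W\cap q^\perp$ and $v\in W^{\perp}$, i.e.\ $\mathrm iu\in W$. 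As $q$ ranges over the unit sphere of $W$, where $\dim_\RR W\ge 2$ is used, the hyperplanes $W\cap q^\perp$ exhaust $W$, whence $\mathrm iW\subseteq W$ and $W$ is complex; the magnetomorphic identification then follows as above. The main obstacle is precisely this last step: correctly reading condition (3)(c) of Theorem~\ref{t:totmag} as the algebraic statement $\mathrm iW=W$, which requires identifying the normal space $T_qN^{\perp}=W^{\perp}$ and exploiting that the hyperplanes $W\cap q^\perp$ cover $W$.
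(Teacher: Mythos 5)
Your proof is correct, and while its skeleton matches the paper's (reduce the group computation to linear algebra via the isometry group of the round sphere, get the moment map from Cartan's formula applied to the invariant primitive $\lambda-\pi_{TM}^*\alpha$, and reduce the classification to the totally geodesic case plus a complex-linearity argument), two of the three steps are carried out by genuinely different means. For the group, the paper first observes that $\Phi$ preserves the Reeb direction, so $\Phi\mathrm iz=\pm\mathrm i\Phi z$, and then uses the intertwining relation $\d\Phi\cdot Y^1=Y^2_\Phi\cdot\d\Phi$ together with $Y_z=\mathrm iP_{(\C z)^\perp}$ to get $\Phi\mathrm i=\mathrm i\Phi$ on each contact hyperplane; you instead note that $\d\alpha$ is the restriction of the constant ambient form $\omega_0(u,v)=g(\mathrm iu,v)$ and recover invariance of $\omega_0$ on all of $\C^{n+1}$ by choosing, for each pair $u,v$, a unit $z$ orthogonal to both (using $2n+2\ge4$) --- this avoids the Lorentz-force bookkeeping entirely. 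For the classification, the paper works with condition (3a) and splits into the cases $\dim_\RR E=2$ and $\dim_\RR E\ge3$, the first requiring a separate small computation with $u=c\mathrm iz+v$; you use condition (3c), correctly identify $T_qN^\perp=W^\perp$, read the condition as $\mathrm i(W\cap q^\perp)\subseteq W$, and observe that the hyperplanes $W\cap q^\perp$ cover $W$ as $q$ runs over $N$ whenever $\dim_\RR W\ge2$. This gives a single uniform argument with no case distinction, which is cleaner than the paper's. You also supply the ``if'' direction of the classification explicitly (checking (3c) for $N=\S\cap V$ with $V$ complex), which the paper leaves implicit. The moment map computation is the same as the paper's.
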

	\begin{rem}
		Corollary \ref{c:symmetry} illustrates the difference between totally magnetic submanifols and totally geodesic submanifolds. Indeed, every sphere $\SS^h$ obtained by intersecting $\S$ with a \emph{real} vector subspace of $\CC^{n+1}$ is totally geodesic, while only spheres $\SS^{2j+1}$ from intersections with \emph{complex} vector subspace of $\CC^{n+1}$ yield totally magnetic submanifolds.
	\end{rem}
As a special case of Corollary \ref{c:symmetry} we obtain a new proof of the following known fact \cite{Druta-Romaniuc2021}.  
	\begin{cor}
		\label{First main theorem}
		Each magnetic geodesic in $\left(\S, g,\d\alpha\right)$ is contained in a 3-sphere obtained intersecting the ambient sphere with a complex plane.\hfill\qed
	\end{cor}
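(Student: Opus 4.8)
The plan is to deduce this directly from the classification in \Cref{c:symmetry}: every magnetic geodesic will turn out to be tangent to, and hence contained in, a totally magnetic three-sphere of the form $\S\cap V$ with $V\subset\C^{n+1}$ a complex two-plane. Let $\gc$ be a magnetic geodesic and set $(q,v):=(\gc(0),\dgc(0))\in T\S$. The natural candidate for the plane is the complex-linear span of the initial position and velocity, $V:=\mathrm{span}_\C\{q,v\}\subset\C^{n+1}$.

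First I would pin down the complex dimension of $V$. Since $V$ is spanned by two vectors, $\dim_\C V\le 2$. The span degenerates to $\dim_\C V=1$ exactly when $v\in\C q$; because $v\in T_q\S$ forces $\mathrm{Re}\langle q,v\rangle=0$, this happens precisely when $v$ is a real multiple of $R_q=2\mathrm i q$, i.e.\ when $\gc$ starts tangent to the Reeb (Hopf) direction. In either case I would choose a complex two-plane $V'\supseteq V$ with $\dim_\C V'=2$ (possible whenever $n\ge1$; for $n=1$ one has $V'=\C^2$ and the claim is immediate). Renaming $V'$ as $V$, the set $N:=\S\cap V$ is the unit sphere of $V\cong\C^2$, hence a three-sphere, and it is totally magnetic by \Cref{c:symmetry} since $V$ is a complex subspace.

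Next I would verify the tangency $(q,v)\in TN$. As $V$ is a linear subspace through the origin, $T_qN=V\cap T_q\S=\{w\in V:\mathrm{Re}\langle q,w\rangle=0\}$, and by construction $q\in N$ while $v\in V$ and $v\in T_q\S$; thus $(q,v)\in T_qN$. Let $\wt\gc$ be the unique magnetic geodesic of the induced system $(N,g_N,\sigma_N)$ with initial condition $(q,v)$. Since $N$ is totally magnetic, the implication $(1)\Rightarrow(2)$ of \Cref{t:totmag} shows that $\iota\circ\wt\gc$ is a magnetic geodesic of $(\S,g,\d\alpha)$, and it shares the initial condition $(q,v)$ with $\gc$. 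Uniqueness of solutions of the magnetic geodesic equation \eqref{e:mg} then forces $\gc=\iota\circ\wt\gc$, so $\gc$ is contained in $N=\S\cap V$, as asserted.

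The only genuine subtlety, and the step I would treat most carefully, is the degenerate Reeb case: there $\mathrm{span}_\C\{q,v\}$ is merely a complex line and must be enlarged, so the containing three-sphere is not canonical. Everything else is bookkeeping resting on \Cref{c:symmetry}, which carries the real weight of the argument. In particular, I never need an explicit description of $\gc$: only the totally magnetic property of $\S\cap V$ together with uniqueness for the ODE \eqref{e:mg}.
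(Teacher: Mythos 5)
Your proposal is correct and follows the paper's intended route: the paper derives this corollary directly from \Cref{c:symmetry} (the statement carries only a \qed, with the totally magnetic three-spheres $\S\cap V$ doing all the work), and you supply exactly the missing bookkeeping — taking $V=\mathrm{span}_\C\{q,v\}$, enlarging it in the degenerate Reeb case $v\in\RR R_q$, and concluding via tangency, \Cref{t:totmag}, and ODE uniqueness. No gaps; this is the same argument, just written out.
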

		\subsection{Contact angle, Clifford tori and Hopf fibration.} 
		To continue our discussion, it is convenient to switch to the description of magnetic geodesics of $(\S,g,\d\alpha)$ with energy $k$ as magnetic geodesics of unit speed for the family of systems $(\S,g,s\d\alpha)$ where $s=\frac{1}{\sqrt{2k}}$ is now referred as the strength of the magnetic geodesic. In this description, the Mañé critical value corresponds to
		\begin{equation}\label{e:Mane value for unit speed magnetic geodesics on Sd}
		s_0:=\frac{1}{\sqrt{2\tfrac18}}=2.
		\end{equation}
		Among all symmetries given by Corollary \ref{c:symmetry}, the one corresponding to the Hopf flow \eqref{e:hopfflow} plays a special role since it corresponds to the center of the group $U(n+1)$. It is generated by $A=\mathrm i\in \mathfrak u(n+1)$ and yields the first integral
		\begin{equation}\label{e:psi}
		\mu(z,v)[\mathrm i]+\tfrac12=g_z(\mathrm{i}z,v)=:\cos\psi(z,v),
		\end{equation}
		so that $\psi=\psi(z,v)\in[0,\pi]$ is the angle between the tangent vector $v$ and the Reeb vector field $R_z=2\mathrm iz$ at $z$. We refer to $\psi$ as the \emph{contact angle}. For every $s\in\RR$, the magnetic geodesics of strength $s$ with $\psi=0$, respectively $\psi=\pi$, are exactly the orbits of the Reeb vector field with the positive, respectively, negative orientation. Magnetic geodesics with $\psi=\tfrac\pi2$ are tangent to the contact distribution $\ker\alpha$. By \cite[Proposition 3.1]{HR08}, it follows that as $s$ varies over $\RR$ magnetic geodesics tangent to $\ker\alpha$ are exactly the sub-Riemannian geodesics of $(\S,g,\ker\alpha)$. We can summarize this discussion in the following result.
		\begin{cor}
		The magnetic geodesic flow $(\S,g,\d\alpha)$ is an interpolation between the sub-Riemannian geodesic flow on $(\S,g,\ker\alpha)$ and the Reeb flow of $\alpha$ on $\S$, where the interpolation parameter is the contact angle $\psi$.\hfill\qed
		\end{cor}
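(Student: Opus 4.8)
The plan is to deduce the statement from the conservation of the contact angle $\psi$ together with an analysis of its extreme and central values. First I would verify that $\psi$ is a first integral of the magnetic geodesic flow. The scalar unitaries $e^{\mathrm it}\mathrm{id}$ form the center of $U(n+1)$, so by Corollary~\ref{c:symmetry} the one-parameter group $z\mapsto e^{\mathrm it}z$, which is the Hopf flow \eqref{e:hopfflow} up to reparametrization, consists of magnetomorphisms and is generated by $A=\mathrm i\in\mathfrak u(n+1)$. Its lift to $T\S$ is Hamiltonian for $\omega_{\d\alpha}$ with Hamiltonian $(z,v)\mapsto\mu(z,v)[\mathrm i]$; since magnetomorphisms preserve both $E$ and $\omega_{\d\alpha}$ (Proposition~\ref{p:magneto}), this function Poisson-commutes with $E$. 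Hence $\cos\psi=\mu(\cdot)[\mathrm i]+\tfrac12$ is constant along every magnetic geodesic by \eqref{e:psi}, and as $\cos$ is injective on $[0,\pi]$ the angle $\psi$ is itself conserved. Consequently the conserved value of $\psi$ foliates the unit tangent bundle into flow-invariant subsets, and it suffices to identify the dynamics on the fibers $\psi\in\{0,\pi\}$ and $\psi=\tfrac\pi2$.

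For the extreme values I would use that $\d\alpha(R,\cdot)=0$ by the definition of the Reeb field, whence $Y_zR_z=0$ by \eqref{e:Lorentz}, i.e.\ $Y_z(\mathrm iz)=0$. A unit-speed magnetic geodesic with $\psi\equiv0$ (resp.\ $\psi\equiv\pi$) has velocity everywhere proportional to $R_\gamma=2\mathrm i\gamma$, so the Lorentz term in \eqref{e:mg} vanishes identically and the equation reduces to $\nabla_{\dgc}\dgc=0$ for every strength $s$; the unique solution with $\dgc=\pm\mathrm i\gamma$ is the positively (resp.\ negatively) oriented Reeb orbit, and conversely every Reeb orbit has $\psi\in\{0,\pi\}$. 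Thus the two extreme fibers carry exactly the two orientations of the Reeb flow, independently of $s$. For the central value, conservation of $\psi$ shows that a magnetic geodesic with $\psi\equiv\tfrac\pi2$ stays tangent to $\ker\alpha=R^\perp$, hence is horizontal; invoking \cite[Proposition~3.1]{HR08}, as $s$ ranges over $\RR$ these horizontal magnetic geodesics are precisely the sub-Riemannian geodesics of $(\S,g,\ker\alpha)$. Assembling the three regimes yields the asserted interpolation with interpolation parameter $\psi$.

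I do not anticipate a serious obstacle, since every ingredient has already been established and the corollary is essentially a repackaging. The only computation needing care is the reduction of \eqref{e:mg} to the geodesic equation on the fibers $\psi\in\{0,\pi\}$, which rests entirely on the identity $Y R=0$ and holds for all strengths simultaneously. I would also flag the mild abuse of terminology: the words \emph{flow} and \emph{interpolation} are meant at the level of unparametrized trajectories and of the $\psi$-foliation of the magnetic flow, rather than as a literal time-parametrized homotopy, because the unit-speed magnetic geodesics and the speed-$2$ Reeb orbits \eqref{e:hopfflow} trace the same curves with different parametrizations.
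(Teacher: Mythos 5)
Your proposal is correct and follows essentially the same route as the paper: the paper's (implicit) proof is exactly the preceding discussion identifying $\cos\psi$ as the first integral coming from the central circle of $U(n+1)$ via the moment map $\mu$, observing that the fibers $\psi\in\{0,\pi\}$ consist of the two orientations of the Reeb orbits and that $\psi=\tfrac\pi2$ gives the horizontal curves, which are the sub-Riemannian geodesics by \cite[Proposition 3.1]{HR08}. The only difference is that you spell out the small verifications the paper leaves implicit (the Noether argument for conservation of $\psi$, and the reduction of \eqref{e:mg} to the geodesic equation on the extreme fibers via $Y_zR_z=0$, which the paper handles through Lemma \ref{l:kill} and Proposition \ref{p:Malpha}), and you correctly flag the parametrization caveat.
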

We now move to analyze magnetic geodesics for an arbitrary value of the contact angle $\psi\in[0,\pi]$. By \cite{CFG}, magnetic geodesics are contained in two-dimensional Hopf tori (also called Clifford tori), and we want to describe how these tori relate to Mañé's critical value and the Hopf--Rinow theorem. We call a pair $w_0,w_1\in\C^{n+1}$ \emph{admissible} if 
\begin{equation}\label{e:w0w1}
\langle w_0,w_1\rangle=0,\qquad |w_0|^2+|w_1|^2=1.
\end{equation}
The \emph{width} of an admissible pair is the unique number $\tau\in[0,\pi]$ satisfying
\[
|w_0|=\cos(\tfrac\tau2),\quad|w_1|=\sin(\tfrac\tau2),
\]
or, equivalently, 
\begin{equation}\label{e:width}
|w_0|^2-|w_1|^2=\cos\tau.
\end{equation}
Notice that if $(w_0,w_1)$ is admissible with width $\tau$, then $(w_1,w_0)$ is admissible with width $\pi-\tau$. 

The two-dimensional \emph{Clifford torus} associated with the admissible pair $(w_0,w_1)$ is defined by
\[
\mathbb T^2_{w_0,w_1}:=\{ \lambda_0w_0+\lambda_1w_1\mid \lambda_0,\lambda_1\in \SS^1\subset\C\}.
\] 
We define the width of the torus as the width of the associated pair $(w_0,w_1)$. Observe that $\mathbb T^2_{w_0,w_1}=\mathbb T^2_{w_1,w_0}$, and therefore, the width of the torus is defined only up to the identification $\tau\sim \pi-\tau$. Moreover, if $|w_0|=1$, then $\mathbb T^2_{w_0,0}$ degenerates to the Hopf circle through $w_0$.

To put Clifford tori and magnetic geodesics in relation, we define for $s\geq 0$ the function
\begin{equation}
C_s(\psi):=(-\tfrac s2+\cos\psi,\sin\psi): [0,\pi] \to\IR^2
\end{equation}
and consider its image
\begin{equation}\label{e:Cs}
C_s([0,\pi])=\Big\{(-\tfrac s2+\cos\psi,\sin\psi)\mid \psi\in[0,\pi]\Big\}\subset \RR^2.
\end{equation}
The set $C_s([0,\pi])$ is the upper half-circle of radius $1$ in the plane with center $(-\tfrac s2,0)$, traversed counter-clockwise, see Figure \ref{pic:half_circles}. 
\begin{figure}[htb]
\centering
\includegraphics[width=0.5\textwidth]{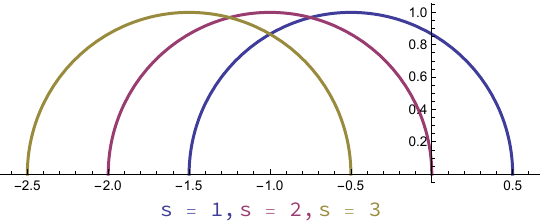}
\caption{The sets $C_s([0,\pi])$ for $s=1,2,3$.}
\label{pic:half_circles}
\end{figure}
Notice that for $0<s<2$, the origin in $\IR^2$ lies between the circle center and $C_s(0)$. The half-circle $C_2([0,\pi])$ meets the origin at $C_2(0)=(0,0)$.  For $s>2$, the origin lies to the right of $C_s(0)$. We denote by the functions
\[
|C_s|\colon [0,\pi]\to \big[1+\tfrac s2,|1-\tfrac s2|\big],\qquad \mathrm{Arg}(C_s)\colon[0,\pi]\to[0,\pi]
\]
the Euclidean norm of the point $C_s(\psi)$ and its angle with respect to the origin and the $x$-axis. For $s>0$, $|C_s|$ is strictly monotonically increasing. For $s<2$, $\mathrm{Arg}(C_s)$ is monotonically increasing and bijective. For $s=2$, $\mathrm{Arg}(C_s)$ is strictly monotonically increasing and with range $[\tfrac\pi2,\pi]$ upon setting $\mathrm{Arg}(C_2(0))=\tfrac\pi2$. For $s>2$, $\mathrm{Arg}(C_s)$ is decreasing on $[0,\psi_{\max}(s)]$ and increasing on $[\psi_{\max}(s),\pi]$ where 
\[
\psi_{\max}(s):=\arccos(\tfrac2s)
\]
and $\mathrm{Arg}(C_s(0))=\mathrm{Arg}(C_s(\pi))=\pi$, $\mathrm{Arg}(C_s(\psi_{\max}(s)))=\psi_{\max}(s)+\tfrac\pi2$.

Before presenting the next theorem, we recall for the reader's convenience that \( s = 2 \) corresponds exactly to Mañé's critical value of the magnetic system \( (\S, g, \d\alpha) \) by  \eqref{e:Mane value for unit speed magnetic geodesics on Sd}. Additionally, \( \psi \) represents the angle between the Reeb vector $R_z$ field and a unit tangent vector $(z,v)\in S\S$, as defined in \eqref{e:psi}.
\begin{thm}\label{t:clifford}
Let $S\S$ be the unit sphere bundle of $T\S$, and let $\widetilde{\mathcal M}_* := \psi^{-1}(0) \subset S\S$. There is a smooth map  
\[
(0,\infty) \times S\S \setminus \big(\{2\} \times \widetilde{\mathcal M}_*\big) \to \mathbb{C}^{n+1} \times \mathbb{C}^{n+1}, \qquad (s,z,v) \mapsto (w_0,w_1),
\]  
where, for a given $s$, $(w_0,w_1)$ can be determined as a function of $(z,v)$ by \eqref{e:w1w2}, and vice versa,  $(z,v)$ can be explicitly computed as a function of $(w_0,w_1)$ by \eqref{e:zv}. \\
Moreover, the magnetic geodesic $\gamma$ with strength $s$, initial condition $(z,v) \in S\S$, and contact angle $\psi$ is contained in the Clifford torus $\mathbb{T}^2_{w_0,w_1}$ with width 
\begin{equation}\label{e:tauc}
\tau=\mathrm{Arg}(C_s(\psi)).
\end{equation}
More precisely, $\gamma$ has the form
\begin{equation}\label{e:gammapar}
\gamma(t)=e^{\mathrm i\tfrac s2t}\Big(e^{-\mathrm i|C_s(\psi)|t}w_0+e^{\mathrm i|C_s(\psi)|t}w_1\Big),\quad\forall\,t\in\RR.
\end{equation}
In particular, for $\psi\in(0,\pi)$, then $\gamma$ has rotation number
\[
\rho_s(\psi):=\frac{\tfrac s2-|C_s(\psi)|}{\tfrac s2+|C_s(\psi)|}.
\]
The function $\rho_s\colon(0,\pi)\to (\rho^-_s,\rho^+_s)$ is bijective and monotonically decreasing, where
\begin{equation}\label{e:rho}
\rho^-_s:=-\tfrac{1}{1+s},\ \forall\,s\geq0,\qquad \rho^+_s:=\begin{cases}
	s-1,&\forall\,s\leq 2,\\
	\frac{1}{s-1},&\forall\,s\geq 2.\end{cases}
\end{equation}
Vice versa, if $\gamma$ satisfies \eqref{e:gammapar} for an admissible pair $(w_0,w_1)$, see \eqref{e:w0w1}, with width $\tau$ given by \eqref{e:tauc}, see also \eqref{e:width}, then $\gamma$ is a unit speed magnetic geodesic with strength $s$ and contact angle $\psi$.
\end{thm}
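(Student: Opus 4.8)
The plan is to pass to the ambient space $\C^{n+1}$, convert the magnetic geodesic equation into a linear constant-coefficient ODE, solve it explicitly, and then read off every piece of data from the two characteristic roots.

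First I would compute the Lorentz force of $(\S,g,s\,\d\alpha)$. Since $\d\alpha(u,v)=g(\mathrm iu,v)$ and the outward normal of $\S\subset\C^{n+1}$ at $z$ is $z$ itself, the defining relation \eqref{e:Lorentz} gives $Y_zu=s\big(\mathrm iu-g(\mathrm iu,z)\,z\big)$, the orthogonal projection of $s\,\mathrm iu$ onto $T_z\S$. Using $\nabla_{\dgc}\dgc=\ddgc+|\dgc|^2\gc$ for curves on the unit sphere, the magnetic geodesic equation \eqref{e:mg} for a unit speed curve becomes $\ddgc+\gc=s\,\mathrm i\dgc-s\,g(\mathrm i\dgc,\gc)\gc$. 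The contact angle $\psi$, i.e. $\cos\psi=g(\mathrm i\gc,\dgc)$ from \eqref{e:psi}, is a first integral (it is the moment-map component along the center $\mathrm i\in\mathfrak u(n+1)$, cf. Corollary \ref{c:symmetry}); together with the identity $g(\mathrm i\dgc,\gc)=-\cos\psi$ this turns the equation into the linear ODE $\ddgc-s\,\mathrm i\dgc+(1-s\cos\psi)\gc=0$ in $\C^{n+1}$.

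The characteristic polynomial $\mu^2-s\mathrm i\mu+(1-s\cos\psi)=0$ has discriminant $-(s^2+4-4s\cos\psi)=-4|C_s(\psi)|^2$, because $|C_s(\psi)|^2=(\cos\psi-\tfrac s2)^2+\sin^2\psi=\tfrac{s^2}4-s\cos\psi+1$. Hence both roots are purely imaginary, $\mu_\pm=\mathrm i(\tfrac s2\pm|C_s(\psi)|)$, and every solution has the form \eqref{e:gammapar}. Imposing $\gc(0)=z$ and $\dgc(0)=v$ yields a $2\times2$ linear system whose solution expresses $(w_0,w_1)$ explicitly through $z,v,s$ and $|C_s(\psi)|$; this is \eqref{e:w1w2}, and solving back for $(z,v)$ gives \eqref{e:zv}. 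Both maps are smooth exactly where $|C_s(\psi)|\neq0$; since $|C_s(\psi)|^2=\tfrac{s^2}4-s\,g(\mathrm iz,v)+1$ vanishes only at $(s,\psi)=(2,0)$, this is precisely the complement of $\{2\}\times\widetilde{\mathcal M}_*$. Plugging \eqref{e:gammapar} into $|\gc(t)|^2$ gives $|w_0|^2+|w_1|^2+2\,\mathrm{Re}\big(e^{-2\mathrm i|C_s(\psi)|t}\langle w_0,w_1\rangle\big)$; as $\gc$ stays on $\S$ with $|C_s(\psi)|\neq0$, the oscillating term must vanish, forcing $\langle w_0,w_1\rangle=0$ and $|w_0|^2+|w_1|^2=1$, i.e. admissibility \eqref{e:w0w1}. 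A direct computation of $|w_0|^2-|w_1|^2$ and $2|w_0||w_1|$ from \eqref{e:w1w2} identifies them with $\cos\tau$ and $\sin\tau$ for $\tau=\mathrm{Arg}(C_s(\psi))$, using \eqref{e:width} and the coordinates of $C_s(\psi)/|C_s(\psi)|$ on the unit circle, which yields the width. Since each factor $e^{\mathrm i(\frac s2\pm|C_s(\psi)|)t}$ lies in $\SS^1$, \eqref{e:gammapar} shows $\gc(t)\in\mathbb T^2_{w_0,w_1}$. The two angular frequencies are $\omega_\pm=\tfrac s2\pm|C_s(\psi)|$, so the rotation number is their ratio $\rho_s(\psi)=\omega_-/\omega_+$; writing $\rho_s=1-\tfrac{2|C_s(\psi)|}{\frac s2+|C_s(\psi)|}$ shows it is strictly decreasing in $|C_s(\psi)|$, which is strictly increasing in $\psi$, and the boundary values $|C_s(0)|=|1-\tfrac s2|$, $|C_s(\pi)|=1+\tfrac s2$ give the endpoints $\rho_s^\pm$ of \eqref{e:rho}, so $\rho_s$ is a decreasing bijection onto $(\rho_s^-,\rho_s^+)$.

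For the converse I would start from \eqref{e:gammapar} with admissible $(w_0,w_1)$ of width $\tau=\mathrm{Arg}(C_s(\psi))$ and check three things: that $|\gc|\equiv1$ (admissibility), that $|\dgc|^2=\omega_-^2|w_0|^2+\omega_+^2|w_1|^2=1$ (a computation using the width relation between $|w_0|,|w_1|$ and $\psi$), and that $g(\mathrm i\gc,\dgc)\equiv\cos\psi$. Once these hold, $\gc$ is a unit speed curve on $\S$ solving the linear ODE above, which for unit speed and contact angle $\psi$ is exactly the magnetic geodesic equation, so $\gc$ is a magnetic geodesic of strength $s$ and contact angle $\psi$. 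I expect the main difficulty to be neither conceptual nor analytic but bookkeeping: tracking the real and complex conventions (the Hermitian product, the factor $2$ in $R_z=2\mathrm iz$, and the sign in $\cos\psi$) consistently, and in particular fixing the labeling of $w_0$ versus $w_1$ in \eqref{e:w1w2} so that the width comes out as $\mathrm{Arg}(C_s(\psi))$ rather than $\pi-\mathrm{Arg}(C_s(\psi))$, the single point where the two admissible orderings of the pair must be distinguished.
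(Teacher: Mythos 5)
Your proposal follows essentially the same route as the paper: linearize the magnetic geodesic equation in the ambient $\C^{n+1}$ using the conserved contact angle, solve via the characteristic roots $\mathrm i(\tfrac s2\pm|C_s(\psi)|)$, read off \eqref{e:w1w2}, \eqref{e:zv}, admissibility, width and rotation number, and verify the converse by direct computation. The one subtlety you flag --- that the ordering of $(w_0,w_1)$ in \eqref{e:gammapar} determines whether the width comes out as $\mathrm{Arg}(C_s(\psi))$ or $\pi-\mathrm{Arg}(C_s(\psi))$ --- is real and is precisely what the paper absorbs into the identification $\tau\sim\pi-\tau$; your derivation of admissibility from the constancy of $|\gamma(t)|^2$ is a small, valid shortcut compared with the paper's direct computation of $\langle w_0,w_1\rangle$ from \eqref{e:w1w2}.
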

We use \eqref{e:gammapar} to give a condition for the existence of magnetic geodesics connecting two points, which we will use for the proof of Theorem \ref{t:mane}.
\begin{cor}\label{c:connecting}
Let $q_0,q_1\in\S$. There exists a magnetic geodesic $\gamma$ with strength $s$ and contact angle $\psi$ connecting $q_0$ and $q_1$, more precisely such that $\gamma(-\tfrac T2)=q_0$, $\gamma(\tfrac T2)=q_1$ for some $T\in\RR$, if and only if
\begin{equation}\label{e:time}
\langle q_0,q_1\rangle=e^{-\mathrm i\tfrac s2T}\Big(\cos\big(|C_s(\psi)|T\big)+\mathrm i\cos(\mathrm{Arg}(C_s(\psi)))\sin\big(|C_s(\psi)|T\big)\Big).
\end{equation}
\end{cor}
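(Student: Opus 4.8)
The plan is to use the explicit parametrization \eqref{e:gammapar} from Theorem~\ref{t:clifford} and simply compute the Hermitian product $\langle\gamma(-\tfrac T2),\gamma(\tfrac T2)\rangle$. Since a magnetic geodesic $\gamma$ with strength $s$, contact angle $\psi$, and the stated boundary conditions exists precisely when there are an admissible pair $(w_0,w_1)$ of width $\tau=\mathrm{Arg}(C_s(\psi))$ and a time $T\in\RR$ making $\gamma(-\tfrac T2)=q_0$ and $\gamma(\tfrac T2)=q_1$, the task reduces to extracting a coordinate-free relation between $q_0,q_1$ that is invariant under the residual freedom in the choice of $(w_0,w_1)$. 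The Hermitian product is the natural such invariant, since it is preserved by the $U(n+1)$-action of Corollary~\ref{c:symmetry} and the admissible pairs connecting a fixed $(q_0,q_1)$ differ by this symmetry.

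First I would write $q_0=\gamma(-\tfrac T2)$ and $q_1=\gamma(\tfrac T2)$ using \eqref{e:gammapar}. Setting $r:=|C_s(\psi)|$ and abbreviating the phase $e^{\mathrm i\frac s2 t}$, one has
\[
\gamma(\pm\tfrac T2)=e^{\pm\mathrm i\frac s4 T}\Big(e^{\mp\mathrm i r\frac T2}w_0+e^{\pm\mathrm i r\frac T2}w_1\Big).
\]
Then I would compute $\langle q_0,q_1\rangle=\langle\gamma(-\tfrac T2),\gamma(\tfrac T2)\rangle$ using sesquilinearity, the orthogonality $\langle w_0,w_1\rangle=0$ from \eqref{e:w0w1}, and the normalization $|w_0|^2+|w_1|^2=1$. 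The cross terms drop out, and the overall phase prefactor assembles to $e^{-\mathrm i\frac s2 T}$, leaving
\[
\langle q_0,q_1\rangle=e^{-\mathrm i\frac s2T}\Big(|w_0|^2e^{\mathrm i rT}+|w_1|^2e^{-\mathrm i rT}\Big).
\]
Splitting into real and imaginary parts and inserting $|w_0|^2+|w_1|^2=1$ together with the width identity $|w_0|^2-|w_1|^2=\cos\tau$ from \eqref{e:width} converts this into $e^{-\mathrm i\frac s2T}\big(\cos(rT)+\mathrm i\cos\tau\,\sin(rT)\big)$, which is exactly \eqref{e:time} after substituting $\tau=\mathrm{Arg}(C_s(\psi))$ and $r=|C_s(\psi)|$.

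This establishes the forward direction: any connecting magnetic geodesic produces the claimed equation. For the converse I would argue that if \eqref{e:time} holds, then one can choose an admissible pair $(w_0,w_1)$ with the prescribed width $\tau$ and solve for the required time $T$, invoking the bijective correspondence $(s,z,v)\leftrightarrow(w_0,w_1)$ of Theorem~\ref{t:clifford} together with the transitivity of the $U(n+1)$-action to place $q_0,q_1$ in the computed positions; the displayed computation then runs in reverse. The main obstacle I anticipate is not the algebra of the forward computation, which is routine, but verifying that the Hermitian product is a \emph{complete} invariant for the converse—that is, that \eqref{e:time} is not merely necessary but sufficient. This requires checking that, given $q_0,q_1$ with $\langle q_0,q_1\rangle$ of the stated form, a genuine admissible pair and initial condition $(z,v)\in S\S$ realizing them actually exist, which is where the normalization and orthogonality constraints \eqref{e:w0w1} and the explicit inversion \eqref{e:zv} must be used carefully.
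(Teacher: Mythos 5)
Your forward direction is exactly the paper's: write $q_0=\gamma(-\tfrac T2)$, $q_1=\gamma(\tfrac T2)$ via \eqref{e:gammapar}, expand the Hermitian product using $\langle w_0,w_1\rangle=0$, $|w_0|^2+|w_1|^2=1$ and the width identity \eqref{e:width}, and arrive at \eqref{e:time}. For the converse, however, you take a genuinely different route. The paper inverts the system \eqref{e:q0q1} explicitly: when $\sin(|C_s(\psi)|T)\neq0$ it solves the linear system for $(w_0,w_1)$ in terms of $(q_0,q_1)$ and checks admissibility and width directly; when $\sin(|C_s(\psi)|T)=0$ the system degenerates, and the paper uses Cauchy--Schwarz to deduce $q_1=e^{\mathrm i\tfrac{sT}{2}}q_0$ and then builds an admissible pair by hand. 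You instead propose a symmetry argument: take \emph{any} admissible pair of the prescribed width, form the model geodesic, observe by the forward computation that its endpoints at $\pm\tfrac T2$ have the same Hermitian product as $(q_0,q_1)$, and transport the model to $(q_0,q_1)$ by an element of $U(n+1)$, which is a magnetomorphism preserving strength and contact angle by Corollary \ref{c:symmetry} and Proposition \ref{p:magneto}. This is viable and, if completed, is cleaner than the paper's proof because it avoids the case split at $\sin(|C_s(\psi)|T)=0$ entirely; the paper's version buys explicit formulas for $(w_0,w_1)$ in terms of $(q_0,q_1)$. The one thing you must actually supply, and currently only name as an ``anticipated obstacle,'' is the fact that $U(n+1)$ acts transitively on ordered pairs of unit vectors with a prescribed Hermitian product. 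This is standard (two pairs with equal Gram matrix are related by a unitary of their spans, which extends to $\C^{n+1}$, including the degenerate case $|\langle q_0,q_1\rangle|=1$ where the pair spans a line), but as written your converse is a plan rather than a proof: state and prove this transitivity lemma, and your argument closes.
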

\begin{rem}
At \url{https://www.desmos.com/calculator/ldvfhhpkzl}, you can see the plot of the right-hand side of \eqref{e:time} as a function of $T\in[0,2\pi b|C_s(\psi)|^{-1}]$. The parameters of the plot are $b,s,a:=\cos\psi$.
\end{rem}
The number $|\langle q_0,q_1\rangle|$ and magnetic geodesics can be interpreted in terms of the Hopf map $\pi\colon \S\to\mathbb CP^n$. For simplicity, we restrict the discussion to a totally magnetic $\Sd$, that is, to $n=1$. In this case, there is an identification of $\mathbb C P^1$ with $\SS^2(\tfrac12)\subset\RR^3$, the Euclidean sphere of radius $\tfrac12$ which makes $\pi$ a Riemannian submersion. We denote by $g$ is the round metric on $\SS^2(\tfrac12)$ and $\sigma$ its area form. For $q_0,q_1\in\Sd$, the distance between $\pi(q_0)$ and $\pi(q_1)$ on $\SS^2(\tfrac12)$ is $\arccos\big(|\langle q_0,q_1\rangle|\big)$.
\begin{thm}\label{t:hopf}
 If $\gc$ is a unit speed magnetic geodesic on $(\Sd,g,s\d\alpha)$, $s\geq0$, with contact angle $\psi\in[0,\pi]$, then $\pi(\gc)$ is a constant speed magnetic geodesic on $\SS^2(\tfrac12)$. More precisely, after an orientation-preserving reparametrization $\pi(\gc)$ is a unit speed magnetic geodesic of $\big(\SS^2(\tfrac12),g,a_s(\psi)\sigma\big)$ with 
\begin{equation}\label{e:curvature}
a_s(\psi)=\frac{2}{\tan(\mathrm{Arg}(C_s(\psi)))}=\frac{2\cos\psi-s}{\sin\psi}\colon (0,\pi)\to \RR.
\end{equation}
In particular, the projected curve $\pi(\gc)$ is a geodesic circle of radius 	
\[
r_s(\psi):=\tfrac12\mathrm{Arg}(C_s(\psi))\in \left[0,\tfrac{\pi}{2}\right].
\]
\end{thm}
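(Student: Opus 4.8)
The plan is to compute the projection $\pi(\gc)$ directly from the explicit parametrization \eqref{e:gammapar} supplied by Theorem \ref{t:clifford}, and to recognize it as a circle of latitude on $\SS^2(\tfrac12)$. I treat $\psi\in(0,\pi)$; the endpoints $\psi\in\{0,\pi\}$ give Hopf fibers, which $\pi$ collapses to points, consistent with $r_s=0$ there. For $s=0$ the formula \eqref{e:gammapar} persists with $|C_0(\psi)|=1$ and describes a great circle, so the argument below applies verbatim.

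Since $n=1$ and the pair $(w_0,w_1)$ is admissible with $\langle w_0,w_1\rangle=0$ and both entries nonzero, the normalized vectors $e_0:=w_0/|w_0|$ and $e_1:=w_1/|w_1|$ form a unitary basis of $\C^2$, and by \eqref{e:width} we have $|w_0|=\cos\tfrac\tau2$, $|w_1|=\sin\tfrac\tau2$ with $\tau=\mathrm{Arg}(C_s(\psi))$. Writing $\gc(t)=\zeta_0(t)e_0+\zeta_1(t)e_1$, equation \eqref{e:gammapar} gives $\zeta_0=e^{\mathrm i\frac s2t}e^{-\mathrm i|C_s(\psi)|t}\cos\tfrac\tau2$ and $\zeta_1=e^{\mathrm i\frac s2t}e^{\mathrm i|C_s(\psi)|t}\sin\tfrac\tau2$. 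I then feed this into the Hopf map, written in coordinates as $\pi(\zeta_0,\zeta_1)=\big(\mathrm{Re}(\zeta_0\bar\zeta_1),\mathrm{Im}(\zeta_0\bar\zeta_1),\tfrac12(|\zeta_0|^2-|\zeta_1|^2)\big)$, a Riemannian submersion onto $\SS^2(\tfrac12)$. The phase $e^{\mathrm i\frac s2t}$ cancels; the third coordinate equals the constant $\tfrac12\cos\tau$, while $\zeta_0\bar\zeta_1=\tfrac12\sin\tau\,e^{-2\mathrm i|C_s(\psi)|t}$ winds at constant angular speed $2|C_s(\psi)|$ on a circle of Euclidean radius $\tfrac12\sin\tau$.

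Because the change of basis to $\{e_0,e_1\}$ lies in $U(2)$, it descends through the Hopf fibration to an isometry of $\SS^2(\tfrac12)$; hence $\pi(\gc)$ is, up to this rotation, the circle of latitude at polar angle $\tau$. This is a geodesic circle of geodesic radius $\tfrac12\tau=r_s(\psi)$, giving the radius claim, and it is traced at constant speed $\tfrac12\sin\tau\cdot 2|C_s(\psi)|=\sin\psi$, using $\sin\tau=\sin\psi/|C_s(\psi)|$. The constancy of the speed also follows intrinsically from the conservation of the contact angle $\psi$: the horizontal part of $\dgc$ relative to $\pi$ has length $|\dgc|\sin\psi=\sin\psi$ throughout. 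Now on an oriented surface with area form $\sigma$, a unit-speed curve solves \eqref{e:mg} for the field $a\sigma$ exactly when its geodesic curvature is the constant $a$, the Lorentz force being $a$ times the rotation $J$ by $+\tfrac\pi2$. A geodesic circle of geodesic radius $\rho$ on $\SS^2(\tfrac12)$ has geodesic curvature $2\cot(2\rho)$, so with $\rho=\tfrac12\tau$ this equals $2\cot\tau$. Reparametrizing $\pi(\gc)$ by arc length rescales time by the positive constant $\sin\psi$ and is therefore orientation-preserving, producing a unit-speed magnetic geodesic for $a_s(\psi)\sigma$ with $a_s(\psi)=2\cot\tau=\tfrac2{\tan(\mathrm{Arg}(C_s(\psi)))}$; the defining relation $\tan(\mathrm{Arg}(C_s(\psi)))=\sin\psi/(\cos\psi-\tfrac s2)$ turns this into $\tfrac{2\cos\psi-s}{\sin\psi}$.

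The computational steps — the phase cancellation, the trigonometric identities, and the curvature of a latitude circle — are routine. The step requiring genuine care is the determination of the \emph{sign} of $a_s(\psi)$: I must fix the orientation of $\SS^2(\tfrac12)$ and the sign of $J$ so that the sense in which $\zeta_0\bar\zeta_1$ winds (clockwise, from $e^{-2\mathrm i|C_s(\psi)|t}$) combined with the hemisphere selected by the sign of $\cos\tau$ reproduces the signed value $\tfrac{2\cos\psi-s}{\sin\psi}$ rather than its modulus. Reassuringly, $a_s$ changes sign precisely when $\cos\psi=\tfrac s2$, i.e.\ $\tau=\tfrac\pi2$, which is exactly when the projected circle crosses the equator and $\cot\tau$ changes sign; checking that these two sign changes match is the crux of the verification.
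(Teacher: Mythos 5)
Your proposal is correct and follows essentially the same route as the paper: plug the explicit solution \eqref{e:gammapar} into the Hopf map after a $U(2)$ normalization, obtain a latitude circle at polar angle $\tau=\mathrm{Arg}(C_s(\psi))$ of geodesic radius $\tfrac12\tau$, and read off $a_s(\psi)=2\cot\tau$ from the radius--strength relation \eqref{e:r}. The only discrepancy is your Hopf-map convention $\zeta_0\bar\zeta_1$ versus the paper's $\bar z_1 z_2$ in \eqref{e:hopf}, which reverses the winding to $e^{-2\mathrm i|C_s(\psi)|t}$; with the paper's convention the projection winds positively and the sign check you flag as the crux becomes immediate.
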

\begin{rem}
The following formula for the radius of the geodesic circle $\pi(\gc)$ of arbitrary speed $c$ is of independent interest and has applications in \cite{BM24}. It is given by:
\[
R_{c,s,\delta} = \sqrt{\frac{(c-\delta)(c+\delta)}{s^2+4(s^2-s\delta)}}, \quad \delta := \cos \varphi,
\]
We leave the details of this computation to the reader. 
\end{rem}
We observe that the function $a_s$ is a bijection if and only if $0\leq s<2$, a fact that can be summarized in the following statement. 
\begin{cor}\label{S3 onto S2}
For fixed $s\in[0,2)$ the magnetic unit-speed magnetic geodesic flow on $(\Sd, g, s \d\alpha)$ covers \emph{all} magnetic systems $(\SS^2(\tfrac12),g,r\sigma)$, $r\in\RR$.
	\end{cor}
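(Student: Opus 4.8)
The projection statement of Theorem~\ref{t:hopf} reduces the corollary to a single property of the real function $a_s$. Indeed, for fixed strength $s\ge 0$, a unit-speed magnetic geodesic of $(\Sd,g,s\d\alpha)$ with contact angle $\psi\in(0,\pi)$ projects, after an orientation-preserving reparametrization, to a unit-speed magnetic geodesic of $(\SS^2(\tfrac12),g,a_s(\psi)\sigma)$ with
\[
a_s(\psi)=\frac{2\cos\psi-s}{\sin\psi}.
\]
Conversely, given $(p,u)$ in the unit sphere bundle of $\SS^2(\tfrac12)$ one lifts $p$ to any $z\in\pi^{-1}(p)$ and chooses a unit vector $v\in T_z\Sd$ of contact angle $\psi$ whose horizontal part projects to $u$; the geodesic issuing from $(z,v)$ then projects onto the unique magnetic geodesic of $(\SS^2(\tfrac12),g,a_s(\psi)\sigma)$ through $(p,u)$. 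Hence the flow on $(\Sd,g,s\d\alpha)$ realizes the system $(\SS^2(\tfrac12),g,r\sigma)$ exactly when $r$ lies in the image of $a_s$, so the corollary is equivalent to the surjectivity of $a_s\colon(0,\pi)\to\RR$. The plan is to prove the sharper statement that $a_s$ is a bijection onto $\RR$ for $0\le s<2$ and fails to be surjective for $s\ge 2$, which also yields the equivalence announced just before the corollary.

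For the boundary behaviour I first record that, as $\psi\to 0^+$, the numerator tends to $2-s>0$ while $\sin\psi\to 0^+$, so $a_s(\psi)\to+\infty$; as $\psi\to\pi^-$, the numerator tends to $-2-s<0$, so $a_s(\psi)\to-\infty$. For monotonicity I would differentiate, obtaining
\[
a_s'(\psi)=\frac{s\cos\psi-2}{\sin^2\psi}.
\]
Since $0\le s<2$ and $\cos\psi<1$ on $(0,\pi)$, the numerator satisfies $s\cos\psi\le s<2$, so $a_s'<0$ throughout. A continuous, strictly decreasing function on $(0,\pi)$ with limits $+\infty$ and $-\infty$ is a bijection onto $\RR$; solving $a_s(\psi)=r$ then produces, for each $r\in\RR$, the contact angle realizing the target system, which proves the corollary.

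To complete the equivalence with the range $0\le s<2$ I would check the failure at $s\ge 2$. For $s=2$ a half-angle simplification gives $a_2(\psi)=-2\tan(\tfrac\psi2)$, whose image is $(-\infty,0)$; for $s>2$ the numerator $s\cos\psi-2$ of $a_s'$ vanishes precisely at $\psi_{\max}(s)=\arccos(\tfrac2s)$, where $a_s$ attains a global maximum, while $a_s\to-\infty$ at both endpoints, so the image is bounded above. In neither case is $a_s$ onto $\RR$.

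Since the whole argument is elementary calculus, I do not expect a substantial obstacle; the only point requiring care is the sign of $a_s'$, which reduces to the inequality $s\cos\psi<2$ on $(0,\pi)$. As an alternative route that avoids differentiation, one may rewrite $a_s(\psi)=2\cot(\mathrm{Arg}(C_s(\psi)))$ and combine the already-established fact that $\psi\mapsto\mathrm{Arg}(C_s(\psi))$ is a strictly increasing bijection of $(0,\pi)$ onto itself for $s<2$ with the bijectivity of $\cot\colon(0,\pi)\to\RR$.
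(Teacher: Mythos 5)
Your argument is correct and follows essentially the paper's route: Theorem \ref{t:hopf} reduces the claim to the surjectivity of $a_s$ onto $\RR$ for $s\in[0,2)$, which the paper obtains from the bijectivity of $\psi\mapsto\mathrm{Arg}(C_s(\psi))$ for $s<2$ together with $a_s=2\cot(\mathrm{Arg}(C_s))$ --- exactly the alternative you record at the end. Your primary verification by differentiating the explicit formula $a_s(\psi)=(2\cos\psi-s)/\sin\psi$ is an equivalent elementary check (the derivative computation and sign analysis are correct), and your discussion of $s\ge 2$ correctly accounts for the ``only if'' direction implicit in the paper's preceding remark.
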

\begin{rem}
	The relationship between magnetic geodesics on $\Sd$ and magnetic geodesics on $\SS^2(\tfrac12)$ given in Theorem \ref{t:hopf} can be seen as an instance of the symplectic reduction for mechanical systems \cite[Theorem 4.3.3]{AM}, where the reduction is associated with the Hamiltonian $\SS^1$-action given by the Hopf flow.
\end{rem}
\begin{rem}
Taking $q_0=(1,0)\in\Sd$ and $q_1=(0,1)\in\Sd$, one can check that the ranges of $a_s$ and $r_s$ given in Theorem \ref{t:hopf} are compatible with the relationship between $\langle q_0,q_1\rangle$ and the existence of magnetic geodesics with strength $s$ connecting $q_0$ and $q_1$ described in Theorem \ref{t:mane}.
\end{rem}
	\begin{figure}
		\centering
		\includegraphics[width=0.7\textwidth]{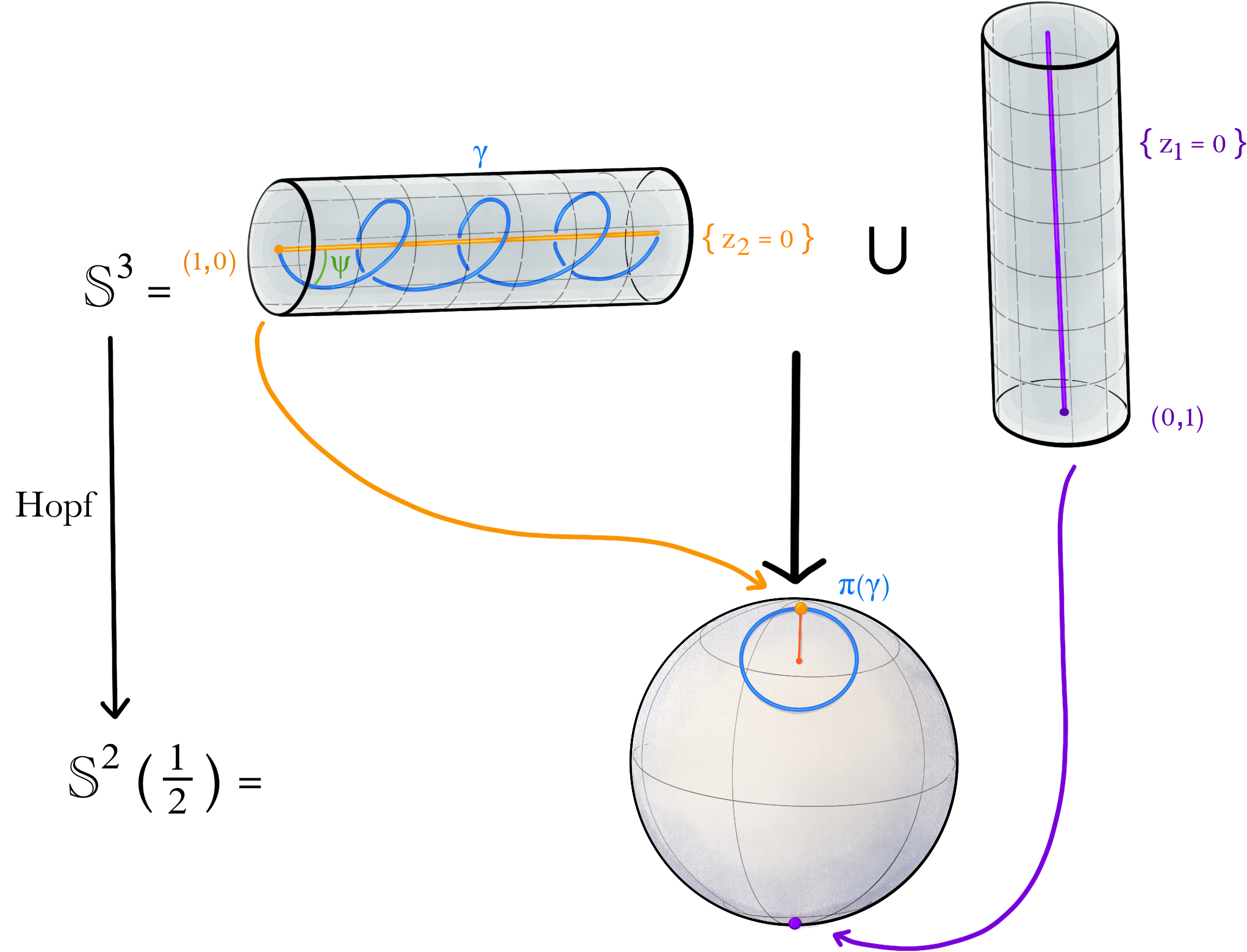}
		\caption{This picture illustrates Theorem \ref{t:hopf} and the fact that we can connect the Reeb orbit \textcolor{orange}{$\{z_2=0\}$} and the Reeb orbit \textcolor{purple}{$\{z_1=0\}$} with a unit speed magnetic geodesic of strength $s$ if and only if $s\in[0,2)$, see Theorem \ref{t:mane}. (Picture by Ana Chavez Caliz.)}
		\label{magnetic flow S3-> S2}
	\end{figure}

	\subsection{Applications to Hamiltonian PDE's and Hofer--Zehnder capacities:}
	To close this introduction let us mention two intriuiging and unexpected applications of the methods developed in this article. First, as already mentioned, in the recent work \cite{M24}, the third author uses the techniques we developed in \Cref{Section manes value} to prove an infinite-dimensional version of \Cref{t:mane} for the completion of a regular Lie group. In particular, he introduces the notion of Mañé's critical value for infinite-dimensional exact magnetic systems and illustrates this with a specific Hamiltonian partial differential equation — the so-called magnetic two-component Hunter--Saxton system. Notably, this system admits a formulation as an infinite-dimensional magnetic system, to which he generalizes \Cref{t:mane} in \cite[Thm. 7.1]{M24}. Second, the billiard system induced by the magnetic flow $(\Sd,g,s\d\alpha)$ on the three-sphere with the Hopf link removed is used in \cite{BM24} by Bimmermann and the third author to compute the Hofer--Zehnder capacity of the disk-tangent bundle of the lens spaces $L(p,1)$ for $p$ odd. In particular, this gives a counterexample of an analog of the Viterbo conjecture in the category of disk-tangent bundles. 
	\medskip

	\medskip
	
	\noindent\textbf{Acknowledgments:}
	We are grateful to Ana Chavez Caliz for providing Figure 1 and 2. We also would like to thank Christoph Schn\"orr for interesting discussions. We are indebted to the referee for their comments, which helped improve the paper.
	
	The authors acknowledge funding by the Deutsche Forschungsgemeinschaft (DFG, German Research Foundation) – 281869850 (RTG 2229), 390900948 (EXC-2181/1) and 281071066 (TRR 191). G.B. and L.M. gratefully acknowledge support from the Simons Center for Geometry and Physics, Stony Brook University at which some of the research for this article was performed during the program
	\textit{Mathematical Billiards: at the Crossroads of Dynamics, Geometry, Analysis, and Mathematical Physics.} 
	
\section{Mañé's critical value for magnetic Lagrangians}\label{Section manes value}
\subsection{Three equivalent definitions}
Let $(M,g)$ be a closed manifold $M$ with Riemannian metric $g$. Let $\alpha$ be a one-form on $M$ and consider the exact magnetic system $(M,g,\d\alpha)$. In this case, the magnetic geodesic flow $\Phi_{g,\sigma}$ is the Euler--Lagrange flow $\Phi_L$ of the magnetic Lagrangian
\[
L\colon TM\to\RR,\quad L(q,v):=\tfrac12|v|^2-\alpha_q(v),
\]
see \cite{Gin}. This means that $\gamma\colon[0,T]\to M$ is a magnetic geodesic if and only if $\gamma$ is a critical point of the action functional $S_L$ among all curves $\delta\colon [0,T]\to M$ such that $\delta(0)=\gamma(0)$ and $\delta(T)=\gamma(T)$. Here the action functional is defined as
\[
S_L(\gamma):=\int_0^TL(\gamma(t),\dot\gamma(t))\d t.
\]
This variational principle prescribes the length $T$ of the time interval and leaves free the energy of $\gamma$. On the other hand, given $k\in\RR$, $\gamma$ is a magnetic geodesic with energy $k$ if and only if $\gamma$ is a critical point of $S_{L+k}$ among all curves $\delta\colon [0,T']\to M$ with $\delta(0)=\gamma(0)$ and $\delta(T')=\gamma(T)$ for an arbitrary $T'>0$.\medskip

The Mañé's critical value of $L$ can be defined in three equivalent ways. First, $c(L)$ is the smallest energy value containing the graph of the differential of a function on $M$:
\begin{equation}\label{d:mane1}
		c(L)= \inf_{f\in C^\infty(M)}\sup_{q\in M}H(q,\d_qf),
	\end{equation}
where the magnetic Hamiltonian
\[
H\colon T^*M\to \RR,\quad H(q,p):=\tfrac12|p+\alpha_q|^2_q.
\]
is the Legendre dual of $L$. Notice that, when $M$ is simply connected, this definition coincide with the definition of the Mañe's critical value of the universal cover $c(M,g,\sigma)$ given in \eqref{e:maneuni}.

Second, $c(L)$ is the smallest energy value such that the free-period action functional with energy $k$ is bounded from below (by zero) on the space of loops:
\begin{equation}\label{d:mane2}
c(L)=\inf\{k\in\RR\mid S_{L+k}(\gamma)\geq 0,\ \forall\,T>0,\ \forall\,\gamma\colon [0,T]\to M,\ \gamma(0)=\gamma(T)\}.
\end{equation}
Third, $c(L)$ is minus the minimal action of invariant Borel probability measures $\mu$ for the flow $\Phi_L$ on $TM$:
\begin{equation}\label{d:mane3}
c(L)=-\inf_{\mu}S_L(\mu).
\end{equation}
Here 
\[
S_L(\mu):=\int_{TM}L(q,v)\d \mu
\]
denotes the action of the invariant probability measure $\mu$. For examples, if $\gamma\colon \RR/T\ZZ\to M$ is a periodic magnetic geodesic then the corresponding invariant measure $\mu_\gamma$ satisfies $S_L(\mu_\gamma)=\frac{S_L(\gamma)}{T}$. In general, if $\gamma$ is any magnetic geodesic, then by the Krylov–Bogolyubov theorem, there exists an invariant probability measure supported on the closure of the set $\{(\gamma(t),\dot\gamma(t))\mid t\in\RR\}$.

The Mather set of the system $\widetilde{\mathcal M}\subset TM$ is defined as the union of the supports of all action-minimizing probability measures. Dias Carneiro showed in \cite{DC} that $\widetilde{\mathcal M}\subset \{E=c(L)\}$, where $E$ is the kinetic energy of the system. Finally, Mather showed that $\widetilde{\mathcal M}$ is a graph over $M$ \cite{Mat}. Thus, we denote by $\mathcal M:=\pi_{TM}(\widetilde{\mathcal M})\subset M$, the projected Mather set.
\subsection{Mañé's critical value for Killing magnetic Lagrangians}
\begin{defn}\label{d:kill}
Let $g$ be a Riemannian metric and $\alpha$ be a one-form on a closed manifold $M$. We denote with $X$ the vector field on $M$ which is the metric dual of $\alpha$ with respect to $g$.
We say that $\alpha$ is Killing for $g$ if $X$ is a Killing vector field for $g$. If $\alpha$ is Killing for $g$, we call the associated Lagrangian $L$ and magnetic field $(M,g,\d\alpha)$ Killing as well. 
\end{defn} 
Recall that $X$ is a Killing vector field, when it generates a flow of isometries for $g$, or equivalently, $\mathcal L_Xg=0$, where $\mathcal L$ denotes the Lie derivative. By \cite[Proposition 2.2.1]{Pet}, this implies that
\[
\tfrac12g(\nabla_uX,v)=\d\alpha(u,v),\quad\forall\,u,v\in TM.
\]
By definition of Lorentz force \eqref{e:Lorentz}, it follows that
\[
Y=2\nabla X.
\]
Therefore, $Y\cdot 2X=\nabla_{2X}(2X)$, which means that the flow lines $\gamma\colon \RR\to M$ of $2X$ are magnetic geodesics by \eqref{e:mg}. As a result, the function $\tfrac12|X|^2=\tfrac12|\alpha|^2$ is constant along $\gamma$. We can now single out a special class of such flow lines.
\begin{lem}\label{l:kill}
Let $\alpha$ be Killing for $g$ and denote by $X$ the corresponding Killing vector field. Let $q$ be a point of $M$. The following properties are equivalent:
\begin{enumerate}
	\item $q$ is a critical point of the function $\tfrac12|\alpha|^2\colon M\to\RR$;
	\item $(\nabla_X X)_q=0$;
	\item $Y_q\cdot X_q=0$;
	\item $\d\alpha_q(X_q,\cdot)=0$;
	\item the Lorentz force $Y_q$ takes values in $\ker\alpha_q=X_q^\perp$;
	\item if $\gamma$ is a magnetic geodesic with $\gamma(0)=q$, then $\frac{\d}{\d t}\big|_{t=0}\alpha(\gamma)=\frac{\d}{\d t}\big|_{t=0}g_\gamma(X_\gamma,\dot\gamma)=0$.
\end{enumerate}
\end{lem}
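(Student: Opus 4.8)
The plan is to show that all six conditions are equivalent to the single infinitesimal condition (4), namely $\d\alpha_q(X_q,\cdot)=0$, treating it as a hub. Two facts recorded just before the lemma drive everything: since $X$ is Killing the endomorphism $u\mapsto\nabla_uX$ is $g$-skew-symmetric, and $Y=2\nabla X$ with $g_q(Y_qu,v)=\d\alpha_q(u,v)$.

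First I would establish the gradient identity $\nabla(\tfrac12|\alpha|^2)=-\nabla_XX$: differentiating $\tfrac12 g(X,X)$ gives $\d(\tfrac12|X|^2)(u)=g(\nabla_uX,X)$, and skew-symmetry rewrites this as $-g(\nabla_XX,u)$. Hence $q$ is critical for $\tfrac12|\alpha|^2$ if and only if $(\nabla_XX)_q=0$, which is (1)$\Leftrightarrow$(2). Since $Y_qX_q=2(\nabla_XX)_q$, this is in turn (2)$\Leftrightarrow$(3). Pairing $Y_qX_q$ with an arbitrary $v$, the relation $g_q(Y_qX_q,v)=\d\alpha_q(X_q,v)$ together with nondegeneracy of $g$ yields (3)$\Leftrightarrow$(4). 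For (4)$\Leftrightarrow$(5) I would use that $\ker\alpha_q=X_q^\perp$, so $Y_qv\in\ker\alpha_q$ means $0=g_q(Y_qv,X_q)=\d\alpha_q(v,X_q)$; letting $v$ range over $T_qM$ and invoking skew-symmetry of $\d\alpha$ recovers exactly (4). All of these are routine once the two driving facts are in place.

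The only step with genuine content is (4)$\Leftrightarrow$(6). For a magnetic geodesic $\gamma$ with $\gamma(0)=q$, I would differentiate $\alpha(\dot\gamma)=g_\gamma(X_\gamma,\dot\gamma)$ by the product rule to obtain $g(\nabla_{\dot\gamma}X,\dot\gamma)+g(X_\gamma,\nabla_{\dot\gamma}\dot\gamma)$. The first term vanishes identically by skew-symmetry of $\nabla X$, and the magnetic geodesic equation $\nabla_{\dot\gamma}\dot\gamma=Y_\gamma\dot\gamma$ turns the second into $g(Y_\gamma\dot\gamma,X_\gamma)=\d\alpha_\gamma(\dot\gamma,X_\gamma)$. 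Evaluating at $t=0$ gives $\frac{\d}{\d t}\big|_{t=0}\alpha(\dot\gamma)=\d\alpha_q(\dot\gamma(0),X_q)$. Since every $v\in T_qM$ arises as $\dot\gamma(0)$ for some magnetic geodesic through $q$, the vanishing of this derivative for all such $\gamma$ is equivalent to $\d\alpha_q(X_q,\cdot)=0$, i.e.\ to (4).

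I expect the (6) computation to be the crux: the substantive points are recognizing that the $g(\nabla_{\dot\gamma}X,\dot\gamma)$ term drops out by skew-symmetry, that the magnetic geodesic equation must be used to handle the second term, and that the initial velocity $\dot\gamma(0)$ ranges freely over $T_qM$ so that the universally quantified statement in (6) collapses to the pointwise condition (4). The remaining equivalences are purely algebraic and require only the gradient identity, the factor-of-two relation $Y=2\nabla X$, nondegeneracy of $g$, and skew-symmetry of $\d\alpha$.
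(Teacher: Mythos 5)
Your proposal is correct and follows essentially the same route as the paper: the paper likewise establishes the chain of identities $\d(\tfrac12|X|^2)=g(\nabla X,X)=-2\d\alpha(X,\cdot)=-g(\nabla_XX,\cdot)=-Y\cdot X$ to handle (1)--(5) at once, and proves (6) by the very same product-rule computation, using $g(\nabla_vX,v)=2\d\alpha_q(v,v)=0$ where you invoke skew-symmetry of $\nabla X$ (the two are the same fact). Your decomposition into pairwise equivalences hubbed at (4) is just a more granular presentation of the identical argument.
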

\begin{proof}
Since $X$ is the dual of $\alpha$, we have $|\alpha|=|X|$. Moreover, 
\[
\d(\tfrac12|X|^2)=g(\nabla X,X)=2\d\alpha(\cdot,X)=-2\d\alpha(X,\cdot)=-g(\nabla_X X,\cdot)=-Y\cdot X.
\]
From this formula, we conclude that equivalence of the first five items of the statement. If $\gamma$ is a magnetic geodesic with $\gamma(0)=q$ and $\dot\gamma(0)=v$, we have
\[
\frac{\d}{\d t}\Big|_{t=0}\alpha(\dot\gamma)=g(\nabla_vX,v)+g(X,\nabla_{\dot\gamma}\dot\gamma)=2\d\alpha_q(v,v)+g(X,Y_qv)=g(X,Y_qv),
\]
from which the equivalence between the last two items of the statement follows.
\end{proof} 
Consider the function $\tfrac{1}{2}|\alpha|^2\colon M\to \RR$. Denote by $\tfrac12\Vert\alpha\Vert_\infty^2$ its maximum and by
\[
M_\alpha:=\{q\in M\mid \tfrac12|\alpha|_q^2=\tfrac12\Vert\alpha\Vert^2\}\subset M
\]
the subset of its maximizers
\begin{prop}\label{p:Malpha}
For all $r\in\RR$, every flow line of $rX$ starting at $q\in M_\alpha$ is entirely contained in $M_\alpha$ and it is both a standard and a magnetic geodesic.
\end{prop}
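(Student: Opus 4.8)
The plan is to exploit two features of a Killing one-form: the squared norm $|X|^2=|\alpha|^2$ is invariant under the flow of $X$, and membership in $M_\alpha$ triggers the equivalent vanishing conditions recorded in Lemma \ref{l:kill}.

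First I would verify that $\tfrac12|\alpha|^2$ is constant along every flow line of $rX$. Since $X$ is Killing, $\mathcal L_Xg=0$, and because $\mathcal L_XX=[X,X]=0$ the Leibniz rule gives
\[
\mathcal L_X\big(g(X,X)\big)=(\mathcal L_Xg)(X,X)+2g(\mathcal L_XX,X)=0.
\]
Hence $|X|^2$, and with it $\tfrac12|\alpha|^2$, is preserved by the flow of $X$; since $rX$ has the same orbits, the same holds for $rX$. Therefore a flow line issued from a maximizer $q\in M_\alpha$ stays at the maximal value of $\tfrac12|\alpha|^2$ for all time and so remains inside $M_\alpha$. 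This already gives the first assertion.

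Next, every point of such a flow line is a maximizer of $\tfrac12|\alpha|^2$, hence a critical point, so Lemma \ref{l:kill} supplies $\nabla_XX=0$ and $Y\cdot X=0$ at each point of the orbit. Writing $\gamma$ for a flow line of $rX$, so that $\dot\gamma=rX_\gamma$, and using that $r$ is constant together with the vanishing of $\nabla_XX$ along $\gamma$, I would compute
\[
\nabla_{\dot\gamma}\dot\gamma=r^2(\nabla_XX)_\gamma=0,
\]
which shows that $\gamma$ is a standard geodesic. For the magnetic equation \eqref{e:mg} I would then note
\[
Y_\gamma\dot\gamma=rY_\gamma X_\gamma=0,
\]
again by Lemma \ref{l:kill}, so that $\nabla_{\dot\gamma}\dot\gamma=0=Y_\gamma\dot\gamma$ and $\gamma$ is simultaneously a magnetic geodesic.

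The crux is really the passage from ``the orbit stays in $M_\alpha$'' to ``$\nabla_XX$ and $Y\cdot X$ vanish along the entire orbit.'' This is where the invariance of $|X|^2$ and Lemma \ref{l:kill} must be combined: invariance upgrades the single hypothesis $q\in M_\alpha$ to the statement that every point of the orbit is a critical point, and only then can the pointwise equivalences of the lemma be invoked at each such point. Beyond carefully recording that these identities hold along the whole flow line rather than only at the initial condition, I do not expect any genuine difficulty.
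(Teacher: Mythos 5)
Your proof is correct and follows essentially the same route as the paper: constancy of $\tfrac12|\alpha|^2$ along the orbit forces the orbit to stay in $M_\alpha$, every point is then a critical point, and the equivalences of Lemma \ref{l:kill} yield $\nabla_XX=0=Y\cdot X$ along the whole flow line. The only (harmless) variation is that you establish the invariance of $|X|^2$ via $\mathcal L_X\big(g(X,X)\big)=0$ from the Killing identity, whereas the paper deduces it from energy conservation after first observing that flow lines of $2X$ are magnetic geodesics.
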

\begin{proof}
Let $q\in M_\alpha$ and let $\gamma$ be a flow line of $2X$ passing through $q$. Since $\tfrac12|\alpha|^2$ is constant along $\gamma$, we conclude that $\gamma$ is contained in $M_\alpha$. Hence, for all $t\in\RR$, $\gamma(t)$ is a critical point of $\tfrac12|\alpha|^2$ since $M_\alpha$ consists of critical points. It follows from the equivalence of the first three items in Lemma \ref{l:kill} that $\nabla_{\dot\gamma}\dot\gamma=0=Y_\gamma\cdot\dot\gamma$, and we conclude that $\gamma$ and all its constant speed reparametrizations are standard geodesics and magnetic geodesics at the same time.
\end{proof}
\begin{thm}\label{t:manegeneral}
Let $L$ be a Killing Lagrangian, then its Mañé critical value is
\[
c(L)=\tfrac12\Vert\alpha\Vert^2_\infty.
\]
If $X\colon M\to TM$ denotes the Killing vector field dual to $\alpha$, then 
\begin{itemize}
	\item the projected Mather set $\mathcal M$ is the union of the supports of all the invariant measures of the flow of $X|_{M_\alpha}$ (in particular, it contains all the periodic orbits of $X|_{M_\alpha}$);
	\item the Mather set is
	\[
	\widetilde{\mathcal M}=X(\mathcal M).
	\]
\end{itemize} 
\end{thm}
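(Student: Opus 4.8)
The plan is to bracket $c(L)$ between $\tfrac12\Vert\alpha\Vert_\infty^2$ from above and below using two of the three equivalent characterizations, and then to pin down $\widetilde{\mathcal M}$ by a rigidity argument on action-minimizing measures. For the upper bound I would use the definition \eqref{d:mane1} with the test function $f=0$: since $H(q,0)=\tfrac12|\alpha_q|^2$, this gives $c(L)\le\sup_{q}\tfrac12|\alpha_q|^2=\tfrac12\Vert\alpha\Vert_\infty^2$.

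For the lower bound I would exhibit invariant measures realizing this value via the measure-theoretic definition \eqref{d:mane3}. By Proposition \ref{p:Malpha} (with $r=1$), every flow line of $X$ starting in $M_\alpha$ stays in $M_\alpha$ and is simultaneously a standard and a magnetic geodesic; hence the section $s\colon q\mapsto(q,X_q)$ intertwines the flow of $X|_{M_\alpha}$ with $\Phi_L$, so that $X(M_\alpha)\subset TM$ is $\Phi_L$-invariant. Given any invariant Borel probability measure $\nu$ of $X|_{M_\alpha}$ (which exists by Krylov--Bogolyubov, as $M_\alpha$ is compact and $X$-invariant), the pushforward $\mu:=s_*\nu=X_*\nu$ is then $\Phi_L$-invariant. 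Using $\alpha_q(X_q)=g(X_q,X_q)=|X_q|^2=\Vert\alpha\Vert_\infty^2$ on $M_\alpha$, I compute $L(q,X_q)=\tfrac12|X_q|^2-\alpha_q(X_q)=-\tfrac12\Vert\alpha\Vert_\infty^2$, so $S_L(\mu)=-\tfrac12\Vert\alpha\Vert_\infty^2$. By \eqref{d:mane3} this forces $c(L)\ge\tfrac12\Vert\alpha\Vert_\infty^2$, and together with the upper bound yields $c(L)=\tfrac12\Vert\alpha\Vert_\infty^2$.

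To identify $\widetilde{\mathcal M}$, I first note that the measures $X_*\nu$ above achieve the minimal action $-c(L)$, hence are action-minimizing, so $X(\mathrm{supp}\,\nu)\subset\widetilde{\mathcal M}$. The crux is the reverse inclusion: I claim every action-minimizing measure $\mu$ is of this form. By Dias Carneiro's theorem $\mathrm{supp}\,\mu\subset\{E=c(L)\}$, i.e. $|v|=\Vert\alpha\Vert_\infty$ on the support. Then $S_L(\mu)=\tfrac12\Vert\alpha\Vert_\infty^2-\int\alpha_q(v)\,\d\mu=-\tfrac12\Vert\alpha\Vert_\infty^2$ gives $\int\alpha_q(v)\,\d\mu=\Vert\alpha\Vert_\infty^2$. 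Pointwise on the support, Cauchy--Schwarz yields $\alpha_q(v)=g(X_q,v)\le|X_q|\,|v|=|\alpha_q|\,\Vert\alpha\Vert_\infty\le\Vert\alpha\Vert_\infty^2$, so equality must hold $\mu$-almost everywhere; the two equalities force $q\in M_\alpha$ and $v=X_q$. Hence $\mathrm{supp}\,\mu\subset X(M_\alpha)$, and since $\mu$ is $\Phi_L$-invariant and the magnetic flow on $X(M_\alpha)$ projects to that of $X|_{M_\alpha}$, the measure $\nu:=(\pi_{TM})_*\mu$ is invariant for $X|_{M_\alpha}$ with $\mu=X_*\nu$.

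Putting these together, the action-minimizing measures are exactly the $X_*\nu$ with $\nu$ invariant for $X|_{M_\alpha}$; taking the union of their supports shows that $\mathcal M$ is the union of the supports of the invariant measures of $X|_{M_\alpha}$ and, using $\pi_{TM}\circ X=\id$, that $\widetilde{\mathcal M}=X(\mathcal M)$. I expect the main obstacle to be the rigidity step: justifying that achieving the minimal action on the critical energy level forces the Cauchy--Schwarz equality case everywhere on the support, and hence that no action-minimizing measure can escape $X(M_\alpha)$ regardless of how complicated the internal dynamics of $X|_{M_\alpha}$ (e.g. irrational or otherwise non-periodic flows) may be.
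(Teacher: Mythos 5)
Your proposal is correct and follows essentially the same route as the paper: both identify $X(M_\alpha)$ as the set where action-minimizing measures must live and use the bijection between $\Phi_L$-invariant measures on $X(M_\alpha)$ and $X$-invariant measures on $M_\alpha$. The only cosmetic differences are that you obtain the upper bound from \eqref{d:mane1} with $f=0$ (the paper gets it from the pointwise lower bound on $L$ integrated against measures) and that your rigidity step invokes Dias Carneiro's theorem plus Cauchy--Schwarz, whereas the paper reads off all three equality conditions at once from the completed square in \eqref{e:belowL}, avoiding the appeal to the graph/energy theorem.
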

\begin{proof}
For all $(q,v)\in TM$, we compute
	\begin{equation}\label{e:belowL}
	L(q,v)=\tfrac12|v|^2-\alpha_q(v)\geq \tfrac12|v|^2-|\alpha_q||v|=\tfrac12\Big(|v|-|\alpha_q|\Big)^2+(\tfrac12\Vert\alpha\Vert_\infty^2-\tfrac12|\alpha_q|^2)-\tfrac12\Vert\alpha\Vert_\infty^2.
	\end{equation}
	Therefore, if $\mu$ is any Borel probability in $TM$, then
	\begin{equation}\label{e:equalityL}
	\int_{TM}L\d\mu\geq\int_{TM}\Big(-\tfrac12\Vert\alpha\Vert^2\Big)\d\mu=-\tfrac12\Vert\alpha\Vert^2_\infty. 
	\end{equation}
	From \eqref{d:mane3}, we deduce that $-\tfrac12\Vert\alpha\Vert^2_\infty\leq -c(L)$, that is, $c(L)\leq \tfrac12\Vert\alpha\Vert^2_\infty$.
	Moreover, a Borel probability measure $\mu$ achieves equality in \eqref{e:equalityL} if and only if $\mu$ is supported in the invariant set 
	\[
	\{(q,v)\in TM\mid v=X_q,\  |\alpha_q|=\Vert\alpha\Vert_\infty \}=X(M_\alpha).
	\]
Finally, a Borel probability measure $\mu$ is supported in $X(M|_{\alpha})$ and it is invariant under $\Phi_L$ if and only if its projection $\pi_{TM}(\mu)$ is supported in $M_\alpha$ and it is invariant under the flow of $X$. The statement follows.
\end{proof}
\begin{rem}
It is also possible to prove the theorem using the definition \eqref{d:mane2} of the critical value. The bound from above follows essentially from \eqref{e:belowL}. The bound from below follows by computing the action of a loop which is the concatenation of
\begin{itemize}
\item a flow line $t\mapsto \Phi_X^t(q)$ with $q\in M_\alpha$ for $t\in[0,T]$ with $T$ arbitrarily large;
\item a geodesics from $\Phi_X^T(q)$ to $q$ with uniformly bounded length.
\end{itemize}
In particular, the theorem holds also on certain non-compact manifolds. For instance, if $X|_{M_\alpha}$ has a periodic orbit.
\end{rem}
\subsection{Mañé's critical value for $K$-contact Lagrangians}
We can apply Theorem \ref{t:manegeneral} to the case of $K$-contact structures $(g,\alpha)$ on simply connected manifolds $M$ \cite{Blair}. In this case, $\alpha$ is a contact form and $g$ is a Riemannian metric with the following properties:
\begin{enumerate}
	\item the contact distribution $\ker\alpha$ and the Reeb vector field $R$ are orthogonal with respect to $g$;
	\item the Reeb vector field $R$ is a Killing vector field for $g$ with constant norm $r>0$.
\end{enumerate}
Recall that $\alpha$ is contact if $\alpha$ is nowhere zero and $\d\alpha$ is non-degenerate on the contact hyperplane distribution $\ker\alpha$. In this case, we can define the Reeb vector field $R$ by the equations $\d\alpha(R,\cdot)=0$ and $\alpha(R)=1$. Notice that if $(M,g,\alpha)$ is $K$-contact, then $\alpha$ is Killing for $g$ according to Definition \ref{d:kill}. Indeed, conditions (1) and (2) imply that the metric dual vector field of $\alpha$ is $X=\tfrac{1}{r^2} R$, which is Killing since $R$ is. We also observe that the function $\tfrac12|\alpha|^2=\tfrac12|X|^2=\tfrac{1}{2r^2}$ is constant and therefore, $M_\alpha=M$ and $c(L)=\tfrac{1}{2r^2}$. Finally, since $M_\alpha=M$, the equivalence between the first and the last item in Lemma \ref{l:kill} implies that $g_\gamma(X_\gamma,\dot\gamma)$ is constant along geodesics constant of motion. Since the speed of magnetic geodesics is constant as well, this is the same as saying that the angle $\psi\in[0,\pi]$ between a magnetic geodesic and the Reeb vector field is constant, where, compatibly with \eqref{e:psi},
\[
\cos\psi=\tfrac{1}{r|\dot\gamma|}g_\gamma(R_\gamma,\dot\gamma).
\]

We finish this section by observing that the magnetic system on the odd-dimensional sphere that we want to study is indeed $K$-contact.
\begin{thm}\label{t:firstpart}
Let $g$ be the standard Riemannian metric on $\S$ and let $\alpha$ be the standard contact form on $\S$, as described after \eqref{e:magsys}. Then $(g,\alpha)$ is $K$-contact on $\S$. The Mañé's critical value of the associated magnetic Lagrangian is $c(L)=\tfrac{1}{8}$ and the Mather set and projected Mather sets are
\[
\widetilde{\mathcal M}=\tfrac14R(\S)\subset T\S,\quad \mathcal M=\S.
\]
\end{thm}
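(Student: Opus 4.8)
The plan is to reduce everything to the general $K$-contact theory already developed: once I verify that $(g,\alpha)$ is a $K$-contact structure with Reeb norm $r=2$, the values of $c(L)$ and of the Mather sets follow from Theorem~\ref{t:manegeneral} together with the computation $M_\alpha=\S$, $c(L)=\tfrac{1}{2r^2}$ recorded in the $K$-contact discussion. So the real work is the verification of the $K$-contact axioms for this specific example.

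First I would recall from the setup after \eqref{e:magsys} that $\alpha_z=\tfrac12\Re\langle\mathrm iz,\cdot\rangle$ is contact with Reeb field $R_z=2\mathrm iz$; one checks $\alpha_z(R_z)=\Re\langle\mathrm iz,\mathrm iz\rangle=|z|^2=1$, and nondegeneracy of $\d\alpha$ on $\ker\alpha$ is standard. The metric dual $X$ of $\alpha$ satisfies $g_z(X_z,v)=\tfrac12\Re\langle\mathrm iz,v\rangle$ for all $v$, whence $X_z=\tfrac12\mathrm iz=\tfrac14R_z$. This immediately gives the orthogonality axiom, since $v\in\ker\alpha_z$ iff $g_z(X_z,v)=0$ iff $g_z(R_z,v)=0$, so that $\ker\alpha_z=R_z^\perp$. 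Moreover $|R_z|^2=\Re\langle 2\mathrm iz,2\mathrm iz\rangle=4|z|^2=4$, so $R$ has constant norm $r=2$.

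The key step is to show that $R$ is Killing. For this I would use that the flow of $R$ is the Hopf flow $\Phi_R^t(z)=e^{2\mathrm it}z$ of \eqref{e:hopfflow}: each time-$t$ map is multiplication by the unit scalar $e^{2\mathrm it}$, hence a unitary transformation of $\C^{n+1}$ preserving both $\S$ and the ambient Euclidean metric, and so restricts to an isometry of $(\S,g)$. Thus $R$ generates a one-parameter group of isometries, i.e.\ $\mathcal L_Rg=0$, and $X=\tfrac14R$ is Killing as well; equivalently this is the central subgroup of the isometry group $U(n+1)$ from Corollary~\ref{c:symmetry}. Together with the previous step this establishes that $(g,\alpha)$ is $K$-contact with $r=2$. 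This identification of the flow of $X$ with the isometric Hopf flow is really the only point requiring an idea; everything else is a direct computation or an appeal to the general theory.

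Finally I would read off the remaining assertions. Since $|\alpha|^2=|X|^2=\tfrac{1}{16}|R|^2=\tfrac14$ is constant, the maximizer set is $M_\alpha=\S$ and Theorem~\ref{t:manegeneral} yields
\[
c(L)=\tfrac12\Vert\alpha\Vert_\infty^2=\tfrac{1}{2r^2}=\tfrac18.
\]
For the Mather set, every orbit of $X|_{M_\alpha}=X|_{\S}$ is a closed Hopf fiber and hence carries an invariant probability measure; the union of their supports is all of $\S$, so the projected Mather set is $\mathcal M=\S$. Theorem~\ref{t:manegeneral} then gives
\[
\widetilde{\mathcal M}=X(\mathcal M)=X(\S)=\tfrac14R(\S)\subset T\S,
\]
which is the claimed description.
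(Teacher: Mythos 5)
Your proposal is correct and follows essentially the same route as the paper: verify the $K$-contact axioms by computing $|R_z|=2$, identifying the metric dual $X=\tfrac14R$, and using that the Hopf flow acts by unitaries so $R$ is Killing, then invoke Theorem~\ref{t:manegeneral} (via the constant $|\alpha|^2=\tfrac14$, hence $M_\alpha=\S$) to get $c(L)=\tfrac18$, and use periodicity of the Hopf fibers to conclude $\mathcal M=\S$ and $\widetilde{\mathcal M}=\tfrac14R(\S)$. No gaps.
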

\begin{proof}
For every $z\in \S$, we recall that $g_z=\mathrm{Re}\langle\cdot,\cdot\rangle$, $R_z=2\mathrm iz$, $\alpha_z=\tfrac12\mathrm{Re}\langle \mathrm iz,\cdot\rangle$, where $\langle\cdot,\cdot\rangle$ is the standard Hermitian form on $\C^{n+1}$. In particular, $|R_z|=2$ and $\alpha_z=\tfrac14g(R_z,\cdot)$ for all $z\in\S$, which shows property (1). Moreover, the Reeb flow is the Hopf flow \eqref{e:hopfflow} which takes values in $U(n+1)$. Therefore, the Hermitian product and hence $g$ are preserved under this flow, which shows that $R$ is Killing. This shows that the metric dual of $\alpha$ is $X=\tfrac14R$ and $c(L)=\tfrac12\tfrac{1}{2^2}=\tfrac18$. Finally, since all orbits of $R$ are periodic, we deduce that $\mathcal M=\S$ and hence $\widetilde{\mathcal M}=X(\S)=\tfrac14R(\S)$.
\end{proof}
With Theorem \ref{t:firstpart}, we have established the first part of Theorem \ref{t:mane}. In the next section, we are going to further specialize our discussion of the magnetic system $(\S,g,\d\alpha)$ and finish the proof of Theorem \ref{t:mane} as well as give the proof of Theorem \ref{t:clifford}.
 \section{Clifford tori and magnetic geodesics on $\S$}
This section will be entirely dedicated to the proof of Theorem \ref{t:clifford}.

From now on, we use the convention of considering magnetic geodesics $\gamma\colon\RR\to \S$ with unit speed for the family $(\S,g,s\d\alpha)$, for $s\geq 0$. Following \cite{MNsphere}, we rewrite the equation of magnetic geodesics \eqref{e:mg} in the ambient space $\C^{n+1}$.

For all $z\in\S$, we have the orthogonal projection 
\[
P_z\colon T_z\C^{n+1}\to T_z\S,\qquad P_zv=v-\mathrm{Re}\langle z,v\rangle z,\quad\forall\,v\in T_z\C^{n+1}.
\]
The Levi-Civita connection on $\S$ is obtained projecting the standard Levi-Civita connection on $\C^{n+1}$: 
\[
\nabla_{\dot\gamma}\dot\gamma=P_\gamma\ddot\gamma=\ddot\gamma-\mathrm{Re}\langle \gamma,\ddot\gamma\rangle \gamma.
\] 
Since $\gamma$ is contained in $\S$, we have that $\dot\gamma\in T_\gamma\S$ and therefore $0\equiv\mathrm{Re}\langle \gamma,\dot\gamma\rangle$. Differentiating this equation, we obtain
\[
0=\mathrm{Re}\langle \dot\gamma,\dot\gamma\rangle+\mathrm{Re}\langle \gamma,\ddot\gamma\rangle=1+\mathrm{Re}\langle \gamma,\ddot\gamma\rangle,
\]
where we used that $\gamma$ has unit speed. Hence,
\[
\nabla_{\dot\gamma}\dot\gamma=\ddot\gamma+\gamma.
\]
Similarly, for the Lorentz force, we have for all $z\in\S$ and $v_1,v_2\in T_z\S$:
\[
\d_z\alpha(v_1,v_2)=\mathrm{Re}\langle \mathrm iv_1 ,v_2\rangle=g_z(P_z\mathrm i v_1,v_2).
\]
This means that
\begin{equation}\label{e:YS}
Y_z=P_z\mathrm i=\mathrm i P_{(\C z)^\perp}=\mathrm i P_{\ker\alpha_z},
\end{equation}
where $P_{(\C z)^\perp}$ denotes the orthogonal projection onto the orthogonal of $\C z$, and we observe that this orthogonal coincides with the contact distribution at $z$.
Therefore,
\[
Y_zv_1=\mathrm iv_1+g_z(\mathrm iz,v_1)z,\quad \forall\,v_1\in T_z\S.
\]
If $\gamma$ is a magnetic geodesic with strength $s$, then by Lemma \ref{l:kill} the angle between $\dot\gamma$ and $R_z=2\mathrm iz$ is constant. Therefore, there exists $\psi\in[0,\pi]$ such that
\[
Y_\gamma\dot\gamma=\mathrm i\dot\gamma+(\cos\psi)\gamma.
\]
Putting these pieces together, we rewrite \eqref{e:mg} with strength $s$ as
\begin{equation}\label{e:linear}
\ddot\gamma -s\mathrm i\dot\gamma+(1-s\cos\psi)\gamma=0.
\end{equation}
Equation \eqref{e:linear} is linear and therefore its solutions can be determined solving the associated equation for the frequencies $\lambda=\mathrm i\theta\in\C$
\[
\theta^2-s\mathrm\lambda-(1-s\cos\psi)=0.
\]
The discriminant is
\[
\big(\tfrac s2\big)^2+(1-s\cos\psi)=(-\tfrac s2+\cos\psi)^2+\sin^2\psi,
\]
which is always non-negative and vanishes if and only if $s=2$ and $\psi=0$. 

Therefore, we have two real solutions $\theta_0(s,\psi)\leq \theta_1(s,\psi)$ which have the formula
\begin{equation}\label{e:thetas}
\theta_0(s,\psi)=\tfrac12s-|C_s(\psi)|,\quad \theta_1(s,\psi)=\tfrac12s+|C_s(\psi)|,
\end{equation}
where we have defined the point
\[
C_s(\psi):=(-\tfrac s2+\cos\psi,\sin\psi),
\]
which lies in the upper half-circle with center $(-\tfrac s2,0)$ and radius $1$, see \eqref{e:Cs}. The parameter $\psi$ represents the angle between the ray emanating from the center of the circle in direction of the positive $x$-axis and the ray passing through $C_s(\psi)$. 

Hence the solutions of \eqref{e:linear} have the form claimed in \eqref{e:gammapar}:
\[
\gamma(t)=e^{\mathrm i\tfrac s2t}\Big(e^{-\mathrm i|C_s(\psi)|t}w_0+e^{\mathrm i|C_s(\psi)|t}w_1\Big),\quad\forall\,t\in\RR.
\] 
for some $w_0,w_1\in\C^{n+1}$. Let us suppose that $\gamma(0)=z$ and $\dot\gamma(0)=v$ and let us look for a formula for $w_0$ and $w_1$. We compute
\[
\dot\gamma(t)=\mathrm i\tfrac s2\gamma(t)+\mathrm i|C_s(\psi)|e^{\mathrm i\tfrac s2t}\Big(-e^{-\mathrm i|C_s(\psi)|t}w_0+e^{\mathrm i|C_s(\psi)|t}w_1\Big),\quad\forall\,t\in\RR.
\]
Setting $t=0$, we get,
\begin{equation}\label{e:zv}
z=w_0+w_1,\quad v=\mathrm i\tfrac s2 (w_0+w_1)+\mathrm i|C_s(\psi)|(w_1-w_0).
\end{equation}
Solving this system for $w_0$ and $w_1$, we get
\begin{equation}\label{e:w1w2}
w_0=\frac{1}{2\mathrm{i}|C_s(\psi)|}\Big(\mathrm{i}(|C_s(\psi)|+\tfrac s2)z-v\Big),\qquad w_1=\frac{1}{2\mathrm{i}|C_s(\psi)|}\Big(\mathrm{i}(|C_s(\psi)|-\tfrac s2)z+v\Big).
\end{equation}
Therefore, we see that $w_0$ and $w_1$ depend smoothly from $z$ and $v$ as soon as $|C_s(\psi)|\neq0$. This last condition is equivalent to $s=2$ and $\psi=0$. as mentioned above.
We compute
\[
\langle w_0,w_1\rangle=\frac{1}{4|C_s(\psi)|^2}\Big(|C_s(\psi)|^2-\tfrac{s^2}{4}-1+(|C_s(\psi)|+\tfrac s2)\langle \mathrm{i}z,v\rangle-(|C_s(\psi)|-\tfrac s2)\overline{\langle \mathrm iz,v\rangle}\Big) 
\]
We have $\mathrm{Re}\langle \mathrm iz,v\rangle=\cos\psi$ and $\mathrm{Im}\langle \mathrm iz,v\rangle=\mathrm{Re}\langle z,v\rangle=0$. The expression within parentheses becomes
\[
|C_s(\psi)|^2-\tfrac{s^2}{4}-1+s\cos\psi
\]
which is equal to zero by definition of $C_s(\psi)$. Therefore, we find that
\[
\langle w_0,w_1\rangle=0
\]
and taking the norm of the first equation in \eqref{e:zv}, we also get
\[
|w_0|^2+|w_1|^2=1.
\]
Recall the definition of the width $\tau\in[0,\pi]$ as $|w_0|=\cos(\tfrac\tau2)$. We obtain from \eqref{e:zv},
\[
\cos\psi=\mathrm{Re} \langle \mathrm iz,v\rangle=\tfrac s2+|C_s(\psi)|(|w_1|^2-|w_0|^2)=\tfrac s2+|C_s(\psi)|\cos\tau.
\]
Hence, $\cos\tau=\frac{-\tfrac s2+\cos\psi}{|C_s(\psi)|}$. Since the cosine function is bijective on $[0,\pi]$, we deduce that
\[
\tau=\mathrm{Arg}(C_s(\psi))
\]
is the angle made by the vector $C_s(\psi)$ with the positive $x$-axis. From the expression of $\gamma$, see \eqref{e:gammapar}, we also get that the frequency of rotation in the $w_0$-factor is $\tfrac{s}{2}-|C_s(\psi)|$, while the frequency of rotation in the $w_1$-factor is $\tfrac{s}{2}+|C_s(\psi)|$. This shows that the rotation number of the orbit is 
\[
\rho_s(\psi)=\frac{\tfrac{s}{2}-|C_s(\psi)|}{\tfrac{s}{2}+|C_s(\psi)|}.
\]
It is immediate to see that $\rho_s\colon(0,\pi)\to(\rho_s^-,\rho_s^+)$ is bijective and motonically decreasing, where $\rho_s^-$ and $\rho_s^+$ are defined in \eqref{e:rho}.

To finish the proof of Theorem \ref{t:clifford}, we need to prove that if $\gamma$ is given by \eqref{e:gammapar} with $w_0,w_1\in\C^{n+1}$ satisfying \eqref{e:w0w1} and with the corresponding Clifford torus having width satisfying \eqref{e:tauc}. There are two possible approaches. The first, more elementary, approach consists in checking that $\gamma$ given by \eqref{e:gammapar} is a unit speed curve contained in $\S$ satisfying \eqref{e:linear}. We leave this computation to the reader. In the second approach, we use the fact that the group $U(n+1)$ sends magnetic geodesics with strength $s$ and contact angle $\psi$ to magnetic geodesics with strength $s$ and contact angle $\psi$ by Corollary \ref{c:symmetry} and Proposition \ref{p:magneto}. The result follows by observing that there exists at least one magnetic geodesic with strength $s$ and contact angle $\psi$ and the fact that $U(n+1)$ acts transitively on the set of pairs $(w_0,w_1)$ satisfying \eqref{e:w0w1} and having prescribed width.
\begin{rem}
In the argument above, we used that the speed $|v|$ (in this case normalized to $1$) and the angle $\psi$ given by $\cos\psi=\mathrm{Re}\langle \mathrm iz,v\rangle$
are preserved along magnetic geodesics. As a sanity check, the reader can prove that if $\gamma$ is a solution to the equation \eqref{e:linear} such that $\gamma(0)=z\in \S$ and $\dot\gamma(0)=v\in T_z\S$, then $|\gamma|=|\dot\gamma|\equiv 1$ and $\mathrm{Re}\langle \mathrm i\gamma,\dot\gamma\rangle\equiv\cos\psi$. In particular, $\gamma$ is a magnetic geodesic with strength $s$ and contact angle $\psi$.
\end{rem}
\section{The Hopf--Rinow Theorem for $(\S,g,\d\alpha)$}\label{s:hr}

In this section, we prove Corollary \ref{c:connecting} and use it to establish the second part of Theorem \ref{t:mane}.
\subsection{Proof of Corollary \ref{c:connecting}}
Let $q_0,q_1$ be two points in $\S$. From Theorem \ref{t:clifford}, we know that there exists a magnetic geodesic $\gamma\colon\RR\to\S$ with strength $s\geq 0$ such that $\gamma(-\tfrac T2)=q_0$ and $\gamma(\tfrac T2)=q_1$ for some $T\geq 0$ if and only if there exists $\psi\in[0,\pi]$ and an admissible pair $w_0,w_1\in\C^{n+1}$ with width $\tau=\mathrm{Arg}(C_s(\psi))$ satisfying
\begin{equation}\label{e:q0q1}
	\begin{aligned}
		q_0&=e^{-\mathrm i\tfrac{sT}{4}}\Big(e^{\mathrm i|C_s(\psi)|\tfrac T2}w_0+e^{-\mathrm i|C_s(\psi)|\tfrac T2}w_1\Big),\\
		q_1&=e^{\mathrm i\tfrac{sT}{4}}\Big(e^{-\mathrm i|C_s(\psi)|\tfrac T2}w_0+e^{\mathrm i|C_s(\psi)|\tfrac T2}w_1\Big).
	\end{aligned}
\end{equation}
Assume now that \eqref{e:q0q1} holds. Taking the Hermitian product of the two right-hand sides, we get
\begin{align*}
	\langle q_0,q_1\rangle&=e^{-\mathrm i\tfrac{sT}{2}}\Big(e^{\mathrm i|C_s(\psi)|T}|w_0|^2+e^{-\mathrm i|C_s(\psi)|T}|w_1|^2\Big)\\
	&=e^{-\mathrm i\tfrac{sT}{2}}\Big(\cos(|C_s(\psi)|T)(|w_0|^2+|w_1|^2)+\mathrm i\sin(|C_s(\psi)|T)(|w_0|^2-|w_1|^2)\Big)\\
	&=e^{-\mathrm i\tfrac{sT}{2}}\Big(\cos(|C_s(\psi)|T)+\mathrm i\cos(\mathrm{Arg}(C_s(\psi)))\sin(|C_s(\psi)|T)\Big),
\end{align*}
where in the last step, we used the definition \eqref{e:width} of the width $\tau$ and equation \eqref{e:tauc}. Thus, we have established equation \eqref{e:time} in Corollary \ref{c:connecting}.

Conversely, assume that equation \eqref{e:time} holds and let us show that we can find an admissible pair $w_0,w_1$ with width $\tau=\mathrm{Arg}(C_s(\psi))$ satisfying \eqref{e:q0q1}. We distinguish two cases. If $\sin(|C_s(\psi)|T)\neq0$, then we can solve the linear system \eqref{e:q0q1} for $w_0$ and $w_1$ and define
\begin{equation}
	\begin{aligned}
		w_0&:=\frac{1}{2\mathrm i\sin(|C_s(\psi)|T)}\Big(e^{\mathrm i(\tfrac{s}{2}+|C_s(\psi)|)\tfrac T2}q_0-e^{-\mathrm i(\tfrac{s}{2}+|C_s(\psi)|)\tfrac T2}q_1\Big),\\
		w_1&:=\frac{1}{2\mathrm i\sin(|C_s(\psi)|T)}\Big(-e^{\mathrm i(\tfrac{s}{2}-|C_s(\psi)|)\tfrac T2}q_0+e^{-\mathrm i(\tfrac{s}{2}-|C_s(\psi)|)\tfrac T2}q_1\Big).
	\end{aligned}
\end{equation}
It is a simple computation using the fact that $q_0,q_1\in\S$ and that \eqref{e:time} holds to show that the pair $w_0,w_1$ is admissible and has width $\tau=\mathrm{Arg}(C_s(\psi))$.

We consider the case $\sin(|C_s(\psi)|T)=0$. Hence, either $\cos(|C_s(\psi)|T)=1$ or $\cos(|C_s(\psi)|T)=-1$. We treat the first subcase only since the second is analogous. Equation \eqref{e:time} becomes
\[
\langle q_0,q_1\rangle=e^{-\mathrm i\tfrac{sT}{2}}.
\]
By Cauchy--Schwarz, this implies that
\begin{equation}\label{e:cs}
q_1=e^{\mathrm i\tfrac{sT}{2}}q_0.
\end{equation}
The system \eqref{e:q0q1} becomes
\[
u_0=w_0+w_1,\qquad u_0:=\lambda q_0=\bar\lambda q_1,\qquad \lambda:=e^{\mathrm i(\tfrac{s}{2}-|C_s(\psi)|)\tfrac T2}.
\]
The equation $\lambda q_0=\bar\lambda q_1$ is satisfied because of \eqref{e:cs} and the assumption $e^{\mathrm i|C_s(\psi)|T}=1$. All admissible pairs $w_0,w_1$ with width $\tau=\mathrm{Arg}(C_s(\psi))$ solving the equation $u_0=w_0+w_1$ with $u_0\in\S$ can be constructed as follows. Let $u_1\in\S$ be any vector such that $\langle u_0,u_1\rangle=\cos\tau$. Consider the solutions $w_0,w_1$ to the system
\[
u_0=w_0+w_1,\quad u_1=w_0-w_1,
\] 
that is, $w_0=\tfrac12(u_0+u_1)$ and $w_1=\tfrac12(u_0-u_1)$. It is a simple computation using that $u_0,u_1\in\S$ and $\langle u_0,u_1\rangle=\cos\tau$ to show that the pair $w_0,w_1$ is admissible and has width $\tau$. This finishes the proof of Corollary \ref{c:connecting}

\subsection{Proof of Theorem \ref{t:mane} (Second part)}\label{ss:mane2}
Let $q_0,q_1\in\S$. Using \eqref{e:time}, we want to determine for which energies $k$, there is a magnetic geodesic with unit speed and strength $s$ connecting $q_0$ and $q_1$. This yields a corresponding result for magnetic geodesics with kinetic energy $k=\tfrac{1}{2s^2}$. We let $\lambda:=\langle q_0,q_1\rangle$. We distinguish three cases for $\lambda$. First, if $|\lambda|=1$, then $q_0$ and $q_1$ are on the same Hopf trajectory. Since every Hopf trajectory is a standard geodesic and it is also a magnetic geodesic with any strength by Proposition \ref{p:Malpha}, we conclude that $q_0$ and $q_1$ are connected by a magnetic geodesic for every strength $s$. Second, if $\lambda=0$, then, by \eqref{e:time}, there is a magnetic geodesic with strength $s$ and contact angle $\psi\in(0,\pi)$ connecting $q_0$ and $q_1$ in time $T$ if and only if
\[
\cos(|C_s(\psi)|T)=0,\quad \mathrm{Arg}(C_s(\psi))=\tfrac\pi2.
\]
The second equation has a solution $\psi$ if and only if $s<2$. If $s<2$, then $T=\tfrac{1}{|C_s(\psi)|}(\tfrac\pi2+h\pi)$ for $h\in\ZZ$ are solutions to the first equation.

Third, we assume that $0<|\lambda|<1$. In this case, we are going to use that, by \eqref{e:time}, we have
\begin{equation}\label{e:|lambda|}
|\lambda|^2=\cos^2(|C_s(\psi)|T)+\cos^2(\mathrm{Arg}(C_s(\psi)))\sin^2(|C_s(\psi)|T),
\end{equation}
In particular,
\begin{equation}\label{e:|lambda|2}
	|\lambda|\geq \big|\cos(\mathrm{Arg}(C_s(\psi))\big|.
\end{equation}
Recall that $\psi\mapsto \mathrm{Arg}(C_s)\colon[0,\pi]\to[0,\pi]$ is
\begin{enumerate}
	\item bijective, monotonically increasing and continuous, for $0<s<2$;
	\item continuous, strictly monotonically increasing on $[0,\psi^{\max}_s]$, strictly monotonically decreasing on $[\psi^{\max}_s,\pi]$ for $s\geq 2$, where $\psi^{\max}_s=\arccos(\tfrac2s)$.
\end{enumerate}
By \eqref{e:Cs}, we have 
\[
\cos(\mathrm{Arg}(C_s(\psi^{\max}_s)))=\frac{-\tfrac s2+\tfrac2s}{\sqrt{\tfrac{s^2}{4}-1}}=-\sqrt{1-\tfrac{4}{s^2}}.
\]
Combining this formula with \eqref{e:|lambda|2}, we get
\begin{equation}\label{e:|lambda|3}
|\lambda|\geq \sqrt{1-\tfrac{4}{s^2}}.
\end{equation}
We distinguish two cases.
\begin{enumerate}
	\item If $|\lambda|=\sqrt{1-\tfrac{4}{s^2}}$, then \eqref{e:|lambda|} is satisfied if and only if $\psi=\psi^{\max}_s$ and $|C_s(\psi^{\max}_s)|T=\tfrac\pi2+m\pi$ for some $m\in\ZZ$.
	Plugging in these values in \eqref{e:time}, we get
	\[
	\lambda=\sqrt{1-\tfrac{4}{s^2}}e^{\mathrm i( a_s+mb_s)},\ \ m\in\ZZ,\qquad a_s:=-\tfrac\pi2\big(1+\tfrac{s}{2|C_s(\psi^{\max}_s)|}\big), \qquad b_s:=\pi\big(1-\tfrac{s}{2|C_s(\psi^{\max}_s)|}\big).
	\]
	\item If $|\lambda|>\sqrt{1-\tfrac{4}{s^2}}$, then there exists a closed interval $I_s\subset[0,\pi]$ of positive length such that \eqref{e:|lambda|3} holds if and only if $\psi\in I_s$. For $\psi\in I_s$, we rewrite \eqref{e:|lambda|} as \[
	\cos^2(|C_s(\psi)|T)=\frac{|\lambda|^2-\cos^2(\mathrm{Arg}(C_s(\psi)))}{1-\cos^2(\mathrm{Arg}(C_s(\psi)))}.
	\]
	and define
	\[
	T(\psi):=\frac{1}{|C_s(\psi)|}\arccos\left(\sqrt{\frac{|\lambda|^2-\cos^2(\mathrm{Arg}(C_s(\psi)))}{1-\cos^2(\mathrm{Arg}(C_s(\psi)))}}\right).
	\]
	\end{enumerate}
Denoting with $f(T)$ the right-hand side of \eqref{e:time}, there is a continuous function $\eta_s\colon I_s\to\RR$ such that 
\[
f(T(\psi))=|\lambda|e^{-\mathrm i\eta_s(\psi)}.
\]
Hence, for all $m\in\ZZ$, we get from \eqref{e:time} that
\[
f\Big(T(\psi)+\tfrac{2\pi}{|C_s(\psi)|} m\Big)=e^{-\mathrm i\tfrac{2\pi s}{|C_s(\psi)|}m}f(T(\psi))=e^{-\mathrm i\eta_{s,m}(\psi)}|\lambda|,\quad \eta_{s,m}(\psi):=\tfrac{2\pi s}{|C_s(\psi)|}m+\eta_s(\psi).
\] 
Our aim is to show that there exists $\psi\in I_s$ and $m\in\NN$ such that $e^{-\mathrm i\eta_{s,m}(\psi)}|\lambda|=\lambda$. To this purpose, we show that for $|m|$ large enough the target of the function $\eta_{s,m}\colon I_s\to\RR$ contains an interval of length at least $2\pi$. This is a consequence of the fact that the function $|C_s|$ is strictly increasing for $s>0$, which we observed in the introduction. Take $\psi_0,\psi_1\in I_s$ such that $|C_s(\psi_0)|\neq |C_s(\psi_1)|$. We have
\[
|\eta_{s,m}(\psi_1)-\eta_{s,m}(\psi_0)|\geq 2\pi s \left|\tfrac{1}{|C_s(\psi_1)|}-\tfrac{1}{|C_s(\psi_0)|}\right||m|-\Big(\max_{\psi\in I_s}\eta_s(\psi)-\min_{\psi\in I_s}\eta_s(\psi)\Big)
\]
and, for every $s>0$, the right-hand side is at least $2\pi$ if $|m|$ is large enough. By the Intermediate Value Theorem, the target of $\eta_{s,m}$ contains an interval of length at least $2\pi$. This finishes the proof of the second part of Theorem \ref{t:mane}.

\section{Magnetic geodesics and the Hopf fibration}\label{s:hopf}
In this section we consider the magnetic system $(\Sd,g,s\d\alpha)$. By Corollary \ref{c:symmetry}, we know that every magnetic geodesic of $(\S,g,s\d\alpha)$ is contained in a totally magnetic three-sphere whose associated magnetic system is magnetomorphic to $(\Sd,g,s\d\alpha)$. Therefore, Theorem \ref{t:hopf} also sheds light to the standard magnetic system on spheres of every odd dimension.

For $a\in\RR$, we define the standard magnetic system $(\SS^2(\tfrac12),g,a\sigma)$, where $\SS^2(\tfrac12)$ is the Euclidean two-sphere of radius $\tfrac12$ in $\RR^3$, $g$ is the associated Euclidean metric, and $\sigma$ is the area form of $g$. By \cite{BKmag}, unit magnetic geodesics of this system are geodesic circles on $\SS^2(\tfrac12)$ of radius $r\in[0,\tfrac\pi2]$ such that 
\begin{equation}\label{e:r}
\tan (2r)=\frac{2}{a}.
\end{equation}
The group of magnetomorphisms of $(\SS^2(\tfrac12),g,a\sigma)$ is $SO(3)$, the group of orientation preserving isometries of $g$.

The Hopf map is given by 
\begin{equation}\label{e:hopf}
	\pi\colon \Sd \longrightarrow \SS^2(\tfrac12)\subset \C\times\RR\equiv \RR^3,\qquad \pi(z):=\tfrac{1}{2}\left(2\bar z_1z_2, |z_1|^2-|z_2|^2\right).
\end{equation}
The Hopf map is a Riemannian submersion between the standard metrics on $\Sd$ and $\SS^2(\tfrac12)$. Moreover, $\pi^*\sigma=\d\alpha$. Every element of $U(2)$ yields a magnetomorphism $A\colon\Sd\to\Sd$ which descends via $\pi$ to a magnetomorphism $\bar A\colon \SS^2(\tfrac12)\to\SS^2(\tfrac12)$. The map
\[
U(2)\to SO(3),\quad A\mapsto\bar A
\]
is surjective and its kernel is given by the multiples of the identity, that is, by the Hopf flow.

\subsection{The proof of Theorem \ref{t:hopf}} Let $q_0,q_1\in\Sd$. We want to show that
\begin{equation}\label{e:dist}
\mathrm{dist}(\pi(q_0),\pi(q_1))=\arccos(|\langle q_0,q_1\rangle|),
\end{equation}
where $\mathrm{dist}(\pi(q_0),\pi(q_1))$ denotes the distance between $\pi(q_0)$ and $\pi(q_1)$ on $\SS^2(\tfrac12)$. Upon acting with a magnetomorphism of $\Sd$, we assume that $q_0=(1,0)\in \Sd$ while $q_1=(z_1,z_2)$ is arbitrary. We have
\[
|\langle q_0,q_1\rangle|=|z_1|.
\]
On the other hand, using \eqref{e:hopf}, we see that
$\pi(q_0)=(0,0,1)$ is the north pole and, therefore, 
\[
\mathrm{dist}(\pi(q_0),\pi(q_1))=\tfrac12\arccos(|z_1|^2-|z_2|^2)=\tfrac122\arccos(|z_1|)=\arccos(|\langle q_0,q_1\rangle|),
\]
where we used the identity $|z_1|^2+|z_2|^2=1$ and the duplication formula for the cosine. This establishes \eqref{e:dist}.

We move on to investigate the projection $\pi(\gamma)\colon\to\SS^2(\tfrac12)$ of magnetic geodesics of strength $s$ and contact angle $\psi$. For this purpose, we are going to use formula \eqref{e:gammapar}:
\[
\gamma(t)=e^{\mathrm i\tfrac s2t}\Big(e^{-\mathrm i|C_s(\psi)|t}w_0+e^{\mathrm i|C_s(\psi)|t}w_1\Big),\quad\forall\,t\in\RR,
\]
where $w_0,w_1\in\C^2$ is an admissible pair with width $\tau=\mathrm{Arg}(C_s(\psi))$. Upon acting with an element of $U(2)$, we can assume that $w_0=\cos(\tfrac\tau2)(1,0)$ and $w_1=\sin(\tfrac\tau2)(0,1)$. In this case,
\[
\pi(\gamma)=\tfrac12\Big(2\cos(\tfrac\tau2)\sin(\tfrac\tau2)e^{\mathrm i2|C_s|t},\cos^2(\tfrac\tau2)-\sin^2(\tfrac\tau2)\Big)=\tfrac12\Big(\sin\tau e^{\mathrm i2|C_s|t},\cos\tau\Big),
\]
which is a constant speed parametrization with positive orientation of a geodesic circle on $\SS^2(\tfrac12)$ of radius
\[
r=\tfrac12\tau=\tfrac12\mathrm{Arg}(C_s(\psi)).
\] 
Using \eqref{e:r}, we see that $\pi(\gamma)$ is a magnetic geodesic for $(\SS^2(\tfrac12),g,a_s(\psi)\sigma)$ with
\[
a_s(\psi):=\frac{2}{\tan(\mathrm{Arg}(C_s(\psi)))},
\]
as we wanted to show. This finishes the proof of Theorem \ref{t:hopf}.
\begin{rem}
Unit speed magnetic geodesics of $(\SS^2(\tfrac12),g,a\sigma)$ are curves with geodesic curvature $\kappa$ equal to $a$ \cite{BKmag}. Therefore, by \eqref{e:curvature}, we see that if $\gamma$ is a unit speed magnetic geodesic in $\Sd$ with strength $s$ and contact angle $\psi$, then $\pi(\gamma)$ has geodesic curvature
\[
\kappa=\frac{2\cos\psi-s}{\sin\psi}.
\]
For instance, if $\gamma$ is tangent to the contact distribution, that is, $\psi=\tfrac\pi2$, we get $\kappa=-s$, which, up to a normalizatin factor of $-2$, coincides with the curvature of a horizontal sub-Riemannian geodesic for $(\Sd,g,\ker\alpha)$ as defined in \cite[§3]{HR08}.
\end{rem}
\begin{rem}
We know that, up to the action of $U(2)$, every unit speed magnetic geodesic of strength $s$ and contact angle $\psi$ can be written as
\[
\gamma(t)=e^{\mathrm i\theta_0t}\cos(\tfrac{\tau}{2})\zeta_0+e^{\mathrm i\theta_1t}\sin(\tfrac{\tau}{2})\zeta_1=\Big(e^{\mathrm i\theta_0t}\cos(\tfrac{\tau}{2}),e^{\mathrm i\theta_1t}\sin(\tfrac{\tau}{2})\Big),\qquad \forall\,t\in\RR,
\]
where $\zeta_0=(1,0)$, $\zeta_1=(0,1)$, $\tau=\mathrm{Arg}(C_s(\psi))$, $\theta_0=\tfrac s2-|C_s(\psi)|$, $\theta_1=\tfrac s2+|C_s(\psi)|$.

The curve $\gamma$ is cointained in the Clifford torus
\[
\mathbb T^2_{\tau}:=\mathbb T^2_{\cos(\tfrac\tau2)\zeta_0,\sin(\tfrac\tau2)\zeta_1},
\]
which is the common boundary of the two solid tori
\[
V_{1,\tau}:=\left\{(z_1,z_2)\in \Sd \mid \vert z_2 \vert \leq \cos(\tfrac\tau2) \right\},\qquad V_{2,\tau}:=\left\{(z_1,z_2)\in \Sd \mid \vert z_1 \vert \leq \sin(\tfrac\tau2) \right\}
\]
In other words,
\[
\Sd=V_{1,\tau}\cup V_{2,\tau},\qquad \mathbb T^2_\tau=V_{1,\tau}\cap V_{2,\tau}.
\]
The two solid tori have souls \[
S_1:= \SS^1\times\{0\}\subseteq \Sd \quad \text{and}\quad S_2:= \{0\}\times\SS^1 \subseteq \Sd,
\]
which are the Hopf fibres, equivalently Reeb orbits, through the points $\zeta_0$ and $\zeta_1$. This discussion can be visualized in the following picture, see also \cite{CFG}.
\begin{figure}
	\centering
	\includegraphics[width=\textwidth]{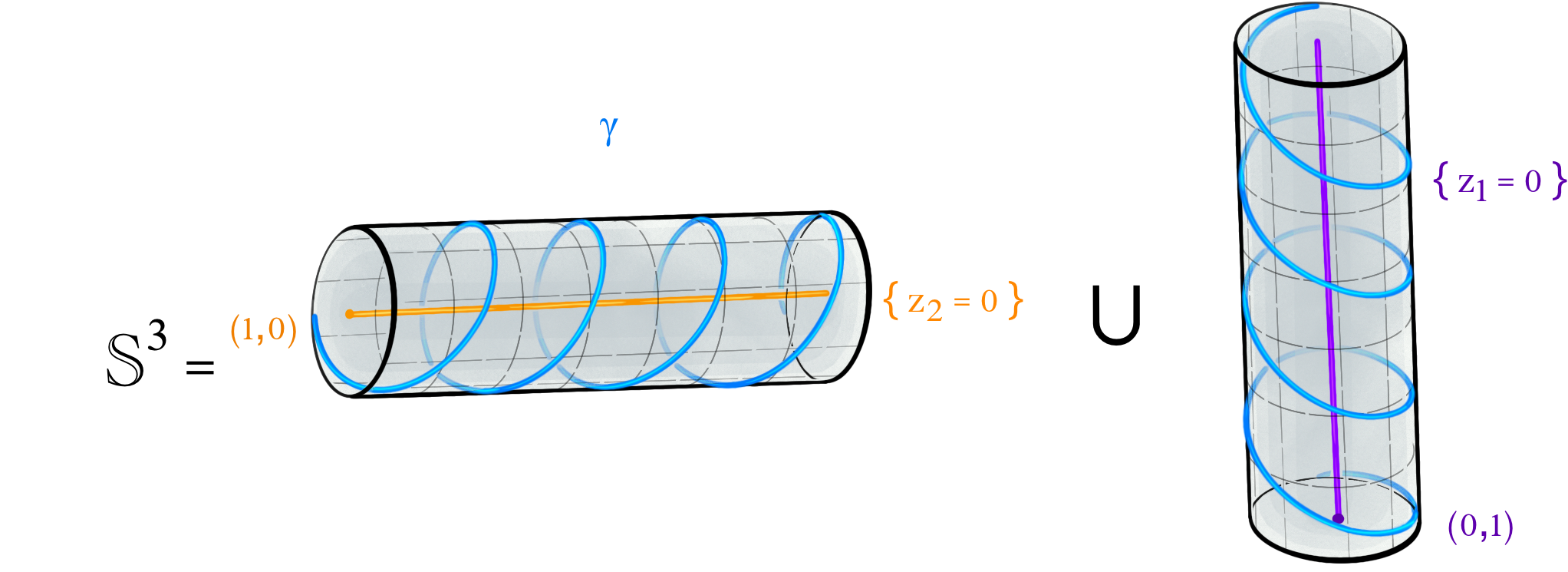}
	\caption{$\Sd$ is the union of two solid tori. The magnetic geodesic \textcolor{blue}{$\gc(t)$} is contained in the Clifford torus which is the common boundary of the two solid tori. The magnetic geodesic spirals around the two Reeb orbits \textcolor{orange}{$\{z_2=0\}$} and \textcolor{violet}{$\{z_1=0\}$} which are the souls of the solid tori. (Picture made by Ana Chavez Caliz.)}	\label{Hopf half}
\end{figure}	
\end{rem}
\subsection{Restriction of $(\Sd,g,\d\alpha)$ around a Hopf fibre}\label{ss:manehopf}
We can use the Hopf map $\pi\colon \Sd\to\SS^2(\tfrac12)$ also to find suitable primitives of $\d\alpha$ centered around the Hopf fibre of a point $q_0\in\Sd$ that give better bounds for the Mañé critical value of the magnetic system restricted to the sets
\[
B_{q_0,k}:=\{q_1\in\Sd\mid |\langle q_0,q_1\rangle|>\sqrt{1-8k}\},\quad k\in(0,\tfrac18).
\]
Up to magnetomorphisms, we can take $q_0=(1,0)$. On $\SS^2(\tfrac12)$, we consider polar coordinates $(r,\phi)\in[0,\tfrac\pi2]\times \SS^1$ around the north pole $\nu_+:=\pi(q_0)$. In these coordinates, we have
\[
g=\d r^2+\tfrac{\sin^2(2r)}{4}\d \phi^2,\qquad \sigma=\tfrac12\sin(2r)\d r\wedge\d\phi.
\]
We consider the primitive
\[
\beta:=\tfrac12\sin^2 r\d\phi
\]
of $\sigma$ in $\SS^2(\tfrac12)\setminus\{\nu_-\}$, the complement of the south pole $\nu_-$. For every $(r,\phi)$, we have
\begin{equation}\label{e:normbeta}
|\beta_{(r,\phi)}|=\tfrac12\sin^2r\frac{2}{\sin(2r)}=\tfrac{1}{2}\tan r.
\end{equation}

For every $\lambda\in[0,1]$, the one-form
\[
\alpha_\lambda:=\lambda\alpha+(1-\lambda)\pi^*\beta
\]
is a primitive of $\d\alpha$ on $B_{q_0,0}=\pi^{-1}(\SS^2(\tfrac12)\setminus \{\nu_-\})$ since \[
\d\pi^*\beta=\pi^*\d\beta=\pi^*\sigma=\d\alpha.
\]
Let $q\in B_{q_0,0}$ be arbitrary and let $(r,\phi)$ be the polar coordinates of $\pi(q)$. We compute
\begin{align*}
|(\alpha_\lambda)_q|^2=|\lambda\alpha_q+(1-\lambda)(\pi^*\beta)_q|^2&=\lambda^2|\alpha_q|^2+2\lambda(1-\lambda)g_q(\alpha_q,(\pi^*\beta)_q)+(1-\lambda)^2|(\pi^*\beta)_q|^2\\
&=\lambda^2\tfrac14+2\lambda(1-\lambda)(\pi^*\beta)_q(\tfrac14R_q)+(1-\lambda)^2|\beta_{\pi(q)}|^2\\
&=\tfrac14\lambda^2+(1-\lambda)^2|\beta_{\pi(q)}|^2,\\
&=\tfrac14\Big(\lambda^2+(1-\lambda)^2\tan^2r\Big)
\end{align*}
where in the last three steps we respectively used that $\tfrac14R$ is the dual of $\alpha$, that 
\[
\pi^*\beta(R)=\beta(\d\pi R)=0
\]
since the Reeb flow $R$ is tangent to the Hopf fibres, and that \eqref{e:normbeta} holds.

For every $k\in(0,\tfrac18)$, let $r_k:=\arccos (\sqrt{1-8k})\in(0,\tfrac\pi2)$ and compute
\begin{equation}\label{e:supra}
\begin{aligned}\sup_{q\in B_{q_0,k}}\tfrac12|(\alpha_\lambda)_q|^2&=\sup_{r\in[0,r_k)}\tfrac18\Big(\lambda^2+(1-\lambda)^2\tan^2r\Big)\\
&=\tfrac18\Big(\lambda^2+(1-\lambda)^2\tan^2r_k\Big)\\
	&=\tfrac18\frac{1}{\cos^2r_k}\Big(\lambda^2-2\lambda\sin^2r_k+\sin^2r_k\Big).
\end{aligned}
\end{equation}
To minimize the last expression in \eqref{e:supra}, we choose $\lambda:=\sin^2r_k\in(0,1)$ and get
\[
\tfrac18\frac{1}{\cos^2r_k}\Big(\lambda^2-2\lambda\sin^2r_k+\sin^2r_k\Big)=\tfrac18\frac{1}{\cos^2r_k}\Big(-\sin^4r_k+\sin^2r_k\Big)=\tfrac18\sin^2r_k=k,
\]
where the last equality stems from the definition of $r_k$.

This shows that the Mañé critical value of the system restricted to $B_{q_0,k}$ is less than or equal to $k$. More precisely, the function 
\[
F(q,v):=\sqrt{2k}|v|_q-(\alpha_\lambda)_q(v)
\]
yields a Finsler metric on $B_{q_0,k}$. The critical points of the length functional of $F$ are magnetic geodesics with energy $k$. It seems a non-trivial to find critical points of the length functional by variational methods since $B_{q_0,k}$ is not closed and $F$ degenerates on its boundary. In particular, one could ask about the existence of length minimizers connecting $q_0$ and an arbitrary point of $B_{q_0,k}$.
\section{Magnetomorphisms and totally magnetic submanifolds}\label{section totally magnetic submnfds}
A classical topic in Riemannian geometry going back to the work of E.~Cartan and M.~Berger is the classification of totally geodesic submanifolds $(N,g_N)$ of a given Riemannian manifold $(M,g_M)$, up to isometry.  So the aim of this section is to study the analog problem in the setting of magnetic systems.
\begin{rem}
The arguments in this section also holds in the category of Riemannian Hilbert manifolds.
\end{rem}
\subsection{The group of magnetomorphisms}
First, let us fix the notion of isomorphism between magnetic systems, as sketched in the introduction. 
\begin{defn}\label{magnetic morphism and iso}
	Let $(M_1,g_1,\sigma_1)$ and $(M_2,g_2,\sigma_2)$ be magnetic systems. A \emph{magnetomorphism} $\Phi\colon M_1\to M_2$ is a diffeomorphim such that
	\[
	\Phi^*g_2=g_1,\quad \Phi^*\sigma_2=\sigma_1.
	\]
	The \emph{group of magnetomorphisms} of a magnetic system $(M,g,\sigma)$ is defined as \[
	\mathrm{Mag}(M,g,\sigma):=\left\{\Phi:M\to M\mid \Phi^*g=g,\ \Phi^*\sigma=\sigma\right\}.
	\]
\end{defn}
\begin{rem}
	If $\Phi\colon(M_1,g_1,\sigma_1)\to(M_2,g_2,\sigma_2)$ is a magnetomorphism and $Y_1$ and $Y_2$ represent the Lorentz forces of the two magnetic systems, then
	\begin{equation}\label{e:Ymag}
		\d\Phi\cdot Y^1=Y^2_\Phi\cdot\d\Phi.
	\end{equation}
\end{rem}
\begin{prop}\label{p:magneto}
	If $\Phi\colon(M_1,g_1,\sigma_1)\to(M_2,g_2,\sigma_2)$ is a magnetomorphism, then $\Phi$ sends magnetic geodesics of $(M_1,g_1,\sigma_1)$ to magnetic geodesics of $(M_2,g_2,\sigma_2)$ with the same kinetic energy. 
\end{prop}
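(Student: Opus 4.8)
The plan is to show directly that if $\gamma$ is a magnetic geodesic of $(M_1,g_1,\sigma_1)$, then $\tilde\gamma:=\Phi\circ\gamma$ solves the magnetic geodesic equation \eqref{e:mg} for $(M_2,g_2,\sigma_2)$, and then to verify that the kinetic energy is unchanged. The two inputs I would use are the naturality of the Levi--Civita connection under an isometry, which follows from $\Phi^*g_2=g_1$, and the intertwining relation \eqref{e:Ymag} between the Lorentz forces, which follows from $\Phi^*g_2=g_1$ together with $\Phi^*\sigma_2=\sigma_1$.

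First I would record the naturality statement: since $\Phi$ is an isometry, it carries the Levi--Civita connection of $g_1$ to that of $g_2$, so for any vector field $V$ along a curve $c$ in $M_1$ one has $\d\Phi\bigl(\tfrac{D^1}{\d t}V\bigr)=\tfrac{D^2}{\d t}(\d\Phi\,V)$, where $\tfrac{D^1}{\d t}$ and $\tfrac{D^2}{\d t}$ denote covariant differentiation along $c$ and along $\Phi\circ c$ with respect to $g_1$ and $g_2$, respectively. Taking $c=\gamma$ and $V=\dot\gamma$, and using $\dot{\tilde\gamma}=\d\Phi\,\dot\gamma$, this gives
\[
\nabla^2_{\dot{\tilde\gamma}}\dot{\tilde\gamma}=\d\Phi\bigl(\nabla^1_{\dot\gamma}\dot\gamma\bigr)=\d\Phi\bigl(Y^1_\gamma\dot\gamma\bigr)=Y^2_{\tilde\gamma}\,\dot{\tilde\gamma},
\]
where the middle equality is the magnetic geodesic equation for $\gamma$ and the last equality is \eqref{e:Ymag}. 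Hence $\tilde\gamma$ is a magnetic geodesic of $(M_2,g_2,\sigma_2)$.

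Finally, the kinetic energy is preserved pointwise: from $\Phi^*g_2=g_1$ one gets $\tfrac12 g_2(\dot{\tilde\gamma},\dot{\tilde\gamma})=\tfrac12(\Phi^*g_2)(\dot\gamma,\dot\gamma)=\tfrac12 g_1(\dot\gamma,\dot\gamma)$, so $\tilde\gamma$ carries the same energy as $\gamma$. The argument presents no genuine obstacle; the only point requiring care is the naturality of the connection used above, which one checks either from the invariance of the Koszul formula under $\Phi$, or, more slickly, by applying the uniqueness of the Levi--Civita connection to the pulled-back connection $\Phi^*\nabla^2$ and identifying it with $\nabla^1$.
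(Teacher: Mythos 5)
Your proposal is correct and follows essentially the same route as the paper: both use the naturality of the Levi--Civita connection under the isometry $\Phi$ together with the intertwining relation \eqref{e:Ymag} to push the magnetic geodesic equation forward, and both note that energy preservation is immediate from $\Phi^*g_2=g_1$. Your extra remark on how to justify the naturality of the connection (uniqueness of Levi--Civita applied to $\Phi^*\nabla^2$) is a detail the paper leaves implicit.
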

\begin{proof}
	Let $\gamma$ be a magnetic geodesic for $(M_1,g_1,\sigma_1)$, that is, 
	\begin{equation}\label{e:mag1}
		\nabla^1_{\dot\gamma}\dot\gamma=Y^1_\gamma\dot\gamma.
	\end{equation}
	The curve $\delta:=\Phi(\gamma)$ has the same kinetic energy as $\gamma$ since $\Phi$ is an isometry. Moreover,
	\[
	\nabla^2_{\dot\delta}\dot\delta=\nabla^2_{\d\Phi\dot\gamma}\d\Phi\dot\gamma=\d\Phi\cdot\nabla^1_{\dot\gamma}\dot\gamma=\d\Phi\cdot Y^1_\gamma\dot\gamma=Y^2_{\Phi(\gamma)}\d\Phi\dot\gamma=Y^2_{\delta}\dot\delta,
	\]
	where in the second step we used that $\Phi$ is an isometry, in the third step we used \eqref{e:mag1}, and in the fourth step we used \eqref{e:Ymag}. Thus $\delta$ is a magnetic geodesic for $(M_2,g_2,\sigma_2)$.
\end{proof}
We can prove the first part of Corollary \ref{c:symmetry} characterizing the magnetomorphisms of $(\S,g,\d\alpha)$.
\subsection{Proof of Corollary \ref{c:symmetry} (First part)}
Let $\Phi$ be a magnetomorphism of $(\S,g,\d\alpha)$. By definition, $\Phi$ is an isometry of $g$. By a classical result \cite{Pet}, we have
\[
\Phi\in O(2(n+1)).
\]
On the other hand, since $\Phi$ preserves $\d\alpha$, it preserves the Reeb direction, which means that 
\[
\Phi \mathrm iz=\pm \mathrm i\Phi z, \qquad \forall\,z\in \S.
\]
Since $\Phi$ is an isometry, we deduce that 
\begin{equation}\label{e:phic}
\Phi((\C z)^\perp)\subset (\C \Phi z)^\perp.
\end{equation}
Moreover, by \eqref{e:Ymag} and \eqref{e:YS}, we get
\[
\Phi\mathrm i P_{(\C z)^\perp}=\mathrm iP_{(\C \Phi z)^\perp}\Phi.
\]
Restricting this equation to $(\C z)^\perp$ and using \eqref{e:phic}, we get $\Phi\mathrm i=\mathrm i\Phi$ on the complex hyperplane $(\C z)^\perp$. Since $z\in \S$ was arbitrary, we conclude that $\Phi\mathrm i=\mathrm i\Phi$ on the whole $\C^{n+1}$, which shows that $\Phi\in U(n+1)$. From this we conclude that 
\[
U(n+1)=\mathrm{Mag}(\S,g,\d\alpha).
\]
We now show that the tangent lift $(\Phi,\d\Phi)\colon T\S\to T\S$ of the $U(n+1)$ action is Hamiltonian with respect to $\omega_{\d\alpha}=\d\lambda-\pi_{TM}^*\d\alpha$ and we compute its momentum map. First, since $\Phi$ is an isometry, the tangent lift is conjugated by the metric isomorphism to the cotangent lift $(\Phi,(\d\Phi^{-1})^*)\colon T^*\S\to T^*\S$. Since we know that that the cotangent lift preserves the canonical one-form we deduce that
\begin{equation}\label{e:philambda}
(\Phi,\d\Phi)^*\lambda=\lambda.
\end{equation}
Moreover, since $\Phi\in U(n+1)$, it follows from the definition of $\alpha$ that $\Phi^*\alpha=\alpha$ and therefore
\begin{equation}\label{e:phialpha}
(\Phi,\d\Phi)^*\pi_{TM}^*\alpha=(\pi_{TM}\circ(\Phi,\d\Phi))^*\alpha=(\Phi\circ\pi_{TM})^*\alpha=\pi_{TM}^*\Phi^*\alpha=\pi_{TM}^*\alpha.
\end{equation}
Consider now a one-parameter subgroup of $U(n+1)$ generated by $A\in\mathfrak u(n+1)$ and denote by $X_A$ the vector field generating the corresponding action on $T\S$. By \eqref{e:philambda} and \eqref{e:phialpha}, we get
\[
0=\mathcal L_{X_A}(\lambda-\pi^*\alpha)=\iota_{X_A}\omega_{\d\alpha}+\d\Big(\lambda(X_A)-\pi^*\alpha(X_A)\Big).
\]
Thus, $X_A$ is Hamiltonian with Hamiltonian function
\[
H(z,v)=\lambda(X_A)_{(z,v)}-\pi^*\alpha(X_A)_{(z,v)}=g_z(Az,v)-\alpha_z(Az),
\]
as we wanted to show.
\subsection{A characterization of totally magnetic submanifolds}
We recall the definition of totally magnetic submanifolds of a magnetic system $(M,g_M,\sigma_M)$ given in the introduction.
\begin{defn}\label{Definition totally magnetic submanifold}
	Let $(M,g_M, \s_M)$ be a magnetic system. An embedded submanifold $N\subset M$ is called totally magnetic if for all magnetic geodesics $\gamma\colon I\to M$ such that $\gamma(0)\in N$ and $\dot\gamma(0)\in T_{\gamma(0)}N$ there exist $\varepsilon>0$ such that $\gamma((-\varepsilon,\varepsilon))\subset N$.
\end{defn}
\begin{rem}
Magnetomorphisms send totally magnetic submanifolds to totally magnetic submanifolds.
\end{rem}
	\begin{rem}
	If $N$ is a totally magnetic submanifold of $M$ which is also a closed subset, then if $\gamma\colon I\to M$ is a magnetic geodesic tangent to $N$, we have $\gamma(I)\subset N$.
	\end{rem}
	\begin{rem}
	When $\sigma=0$, totally magnetic submanifolds recover the notion of totally geodesics submanifolds.
	\end{rem}
	We now prove Theorem \ref{t:totmag}, which turns the local definition of total magnetic submanifolds into an infinitesimal characterization. In order to do so, we define the following objects for an arbitrary embedded submanifold $N$ of $M$. First, let $\iota\colon N\to M$ be the inclusion map and let $\mathrm{P}_{TN}\colon TM|_N\to TN$ be the orthogonal projection. We define the magnetic system $(N,g_N,\sigma_N)$, where 
	\begin{equation}\label{e:gsigma}
	g_N:=\iota^*g_M,\qquad \sigma_N:=\iota^*\sigma_M.
	\end{equation}
	We denote by $\nabla^N$ and $\nabla^M$ the Levi-Civita connections of $g_N$ and $g_M$ and by $Y^N$ and $Y^M$ the Lorentz forces associated with the magnetic systems on $M$ and on $N$. Definition \eqref{e:gsigma} implies that
	\begin{equation}\label{e:ymyn}
	\nabla^N_v(X|_N)=\mathrm P_{TN}\nabla^M_vX,\quad  Y^Nv=\mathrm P_{TN}Y^Mv,
	\end{equation}
	for every $v\in TN$ and every vector field $X$ on $M$.
	
A submanifold \( N \subset M \) is totally geodesic if and only if its \emph{second fundamental form}
\[
\mathrm{II}_q\colon T_qN \to T_qN^{\perp},\quad \mathrm{II}_q(v) := \nabla^M_v V - \nabla^N_v V, \quad \forall\, q \in N
\]  
vanishes identically, that is, $\mathrm{II}_q(v) = 0$ for all $(q, v) \in TN$.

As we will see in Theorem \ref{t:totmag}, totally magnetic submanifolds are exactly those totally geodesic submanifolds for which the intrinsic and extrinsic Lorentz force coincide, that is, $Y_N=Y_M$ on $TN$.
\begin{proof}[Proof of Theorem \ref{t:totmag}]
\mbox{}
	
\underline{$(1)\Rightarrow (2)$:}
	Let $\delta\colon I\to N$ be a magnetic geodesic of $(N,g_N,\sigma_N)$. Let $t_0\in I$ and let $\gc$ be a magnetic geodesic in $\left(M,g_M,\sigma_M\right)$ such that $\gc(t_0)=\delta(t_0)$ and $\dot\gc(t_0)=\dot\delta(t_0)$. By (1), there exists $\varepsilon>0$ such that $\gamma((t_0-\varepsilon,t_0\varepsilon))\subset N$. In particular, $\dot\gamma(t)\in T_{\gamma(t)}N$ for all $t\in(t_0-\varepsilon,t_0+\varepsilon)$ and therefore
	\[
	\nabla_{\dot\gamma}^M\dot\gamma=Y^M_\gamma\dot\gamma\quad\Longrightarrow\quad \mathrm P_{TN}\nabla_{\dot\gamma}^M\dot\gamma=\mathrm P_{TN}Y^M_\gamma\dot\gamma\quad\Longleftrightarrow\quad \nabla_{\dot\gamma}^N\dot\gamma=Y^N_\gamma\dot\gamma
	\]
	Therefore, $\gamma$ is a magnetic geodesic for $(N,g_N,\sigma_N)$ and we conclude from the uniqueness of solutions to ordinary differential equations that $\delta|_{(t_0-\varepsilon,t_0+\varepsilon)}=\gamma|_{(t_0-\varepsilon,t_0+\varepsilon)}$. Since $t_0$ was arbitrary, we conclude that $\delta$ is also a magnetic geodesic for $(M,g_M,\sigma_M)$.
	
	\underline{$(2)\Rightarrow (1)$:} Let $\gamma\colon I\to M$ be a magnetic geodesic for $(M,g_M,\sigma_M)$ such that $(\gamma(0),\dot\gamma(0))\in TN$. Let $\delta\colon(-\varepsilon,\varepsilon)\to N$ be a magnetic geodesic for $(N,g_N,\sigma_N)$ with the same initial conditions as $\gamma$. By (2), $\delta$ is also a magnetic geodesic for $(M,g_M,\sigma_M)$. By uniqueness of solutions to ordinary differential equations, we conclude that $\delta=\gamma|_{(-\varepsilon,\varepsilon)}$. In particular, $\gamma((-\varepsilon,\varepsilon))\subset N$.
	 
	\underline{$(2)\Leftrightarrow (3a)$:}
	If $\delta$ is any curve on $N$, we have
	\[
	\nabla^N_{\dot\delta}\dot\delta-Y^N_\delta\dot\delta=\nabla^M_{\dot\delta}\dot\delta-Y^M_\delta\dot\delta+\II_\delta(\dot\delta)+Y^M_\delta\dot\delta-Y^N_\delta\dot\delta.
	\]
	If (2) holds and $\delta$ is a magnetic geodesic for $(N,g_N,\sigma_N)$, then $0=0+\II_\delta(\dot\delta)+Y^M_\delta\dot\delta-Y^N_\delta\dot\delta$. Statement (3a) follows from the quadratic nature of \( \mathrm{II} \) in \( v \), the linearity of the Lorentz force difference in \( v \), and the arbitrariness of the initial conditions of \( \delta \).  Vice versa, if (3a) holds, the equation above shows that
	\[
	\nabla^N_{\dot\delta}\dot\delta-Y^N_\delta\dot\delta=0\quad\iff\quad \nabla^M_{\dot\delta}\delta-Y^M_\delta\dot\delta=0.
	\]

\underline{$(3a)\Leftrightarrow (3b)$:} From equation \eqref{e:ymyn}, we see that $Y^M|_{TN}-Y^N=\mathrm{P}_{TN^\perp}Y^M|_{TN}$ and the equivalence follows.

\underline{$(3b)\Leftrightarrow (3c)$:} If $u,v\in T_qM$, then
\[
(\sigma_M)_q(u,v)=g_q(Y^M_qu,v).
\]
Therefore, the left-hand side is zero for all $u\in T_qN$ and $v\in T_qN^\perp$ if and only if the right-hand side is zero for all $u\in T_qN$ and $v\in T_qN^\perp$. Finally, the right-hand side is zero for all $u\in T_qN$ and $v\in T_qN^\perp$ if and only if $Y^M_qu\in (T_qN^\perp)^\perp=T_qN$ for all $u\in T_qN$.
\end{proof}
\begin{rem}
If we fix the kinetic energy $k$ (or the strength $s$), one could look at the larger class of submanifolds $N$ which are totally magnetic at energy $k$, where in Definition \ref{Definition totally magnetic submanifold}, we require that the magnetic geodesic $\gamma$ of energy $k$. In this case, suitable versions of (1), (2) as above are still equivalent. Notice that a magnetic geodesic with kinetic energy $k$ is a one-dimensional totally magnetic submanifold at energy $k$. With the stronger notion of being totally magnetic that we use in this article, a magnetic geodesic is totally magnetic if and only if it is a standard geodesic.
\end{rem}

\subsection{Proof of Corollary \ref{c:symmetry} (Second Part)}
We can now use Theorem \ref{t:totmag} to classify \emph{closed, connected} totally magnetic submanifolds $N$ of $(\S,g,\d\alpha)$ with positive dimension and finish the proof of Corollary \ref{c:symmetry}. 

By Theorem \ref{t:totmag}, $N$ is totally geodesic in $(\S,g)$. By the classification of closed, connected totally geodesic submanifolds with positive dimension of spheres of constant curvature \cite[§3,Thm]{Wolf63}, $N$ is a $j$-dimensional Euclidean sphere for some $1\leq j\leq 2n+1$. This means that there exists a real vector subspace $E\subset \CC^{n+1}$ with $\dim_{\RR}E=j+1$ such that 
	\begin{equation}\label{equation 1 for classification }
		N= E\cap \S\subseteq \CC^{n+1}.
	\end{equation}
Our aim is to show that $E$ is a complex vector subspace. Since $N$ is totally magnetic, by condition (3a) in Theorem \ref{t:totmag} we see that for every $z\in N$
\begin{equation}\label{e:YN}
\mathrm i P_{(\C z)^\perp}u=Y^N_zu\in T_zN,\quad \forall\,u\in T_zN.
\end{equation}
We distinguish two cases. In the first case $\dim_{\RR}E=2$. If $z\in N\subset E$, then 
\begin{equation}\label{e:zu}
E=\RR z\oplus \RR u,
\end{equation}
where $u\in z^\perp =T_zN$. It follows that there exists $c\in\RR$ and $v\in (\C z)^\perp$ such that $u=c\mathrm iz+v$. By applying \eqref{e:YN}, we get that 
\[
\mathrm iv=\mathrm i P_{(\C z)^\perp}u\in T_zN.
\] 
From \eqref{e:zu} and the expression for $u$, this forces $v=0$ and hence $c\neq0$ since $\dim_{\RR}E=2$. We conclude that $E=\RR z\oplus \RR(\mathrm iz)=\C z$ is complex.

In the second case, $\dim_{\RR}E\geq 3$. If $z\in N$, then $E\cap (\C z)^\perp\subset T_zN$. Therefore, by \eqref{e:YN}
\[
\mathrm i u\in T_zN\subset E,\qquad \forall\, u\in E\cap (\C z)^\perp.
\]
Thus to show that $E$ is complex, it is enough to prove that for every $u\in N$, there is $z\in N$ such that $u\in E\cap (\C z)^\perp$. Indeed, since $\dim_{\RR}E\geq 3$, the vector space $E\cap (\C u)^\perp$ is non-zero. Thus take $z$ of unit norm contained in $E\cap(\C u)^\perp$. On the one hand, $z\in N$. On the other hand, since $z\in(\C u)^\perp$, we deduce that $u\in (\C z)^\perp$. Hence, $z\in N$ and $u\in E\cap (\C z)^\perp$ as desired.

Now that we have shown that $N=E\cap \S$ where $E$ is complex subspace, let us call $j$ the complex dimension of $E$ and show that $(N,g_N,\sigma_N)$ is magnetomorphic to $(\SS^{2j+1},g,\d\alpha)$. By the first part of Corollary \ref{c:symmetry} there is a magnetomorphism $\Phi\in U(n+1)$ of $\S$ such that 
\[
N=\Phi\Big((\C^j\times 0)\cap \S\Big)=\Phi\Big(\SS^{2j+1}\times \{0\}\Big).
\]
The map $\C^j\to\C^j\times\{0\}$, $z\mapsto (z,0)$ restricts to a diffeomorphism $\Psi\colon \SS^{2j+1}\to \SS^{2j+1}\times \{0\}$ such that 
\[
\Psi^*\Phi^*(g_N,\sigma_N)=\Psi^*(g_{\SS^{2j+1}\times \{0\}},\sigma_{\SS^{2j+1}\times \{0\}})=(g,\d\alpha),
\] 
where $g$ and $\alpha$ are the Euclidean metric and standard contact form on $\SS^{2j+1}$.
	\bibliographystyle{abbrv}
	\bibliography{ref}
	
\end{document}